\documentclass[11pt,reqno]{amsart}
\usepackage{hyperref}

\usepackage{pb-diagram, graphics}
\usepackage{setspace}
%\doublespacing
\usepackage{amscd,amsfonts}
\usepackage{amssymb}
\usepackage{amsmath}
\usepackage{amsthm}
\oddsidemargin .2in
\evensidemargin .2in
\textwidth 6in
%\textsf{doublespace}
\theoremstyle{plain}
\newtheorem{thm}{Theorem}[subsection]

\newtheorem{lem}[thm]{Lemma}
\newtheorem{prop}[thm]{Proposition}

\theoremstyle{definition}
\newtheorem{defn}[thm]{Definition}

\newtheorem{rem}[thm]{Remark}

\newcommand{\B}{\mathcal{B}}

%makes the above unnumbered
\iffalse                    %Comment this out.

\fi

\numberwithin{equation}{subsection}

\def\Z{{\mathbb Z}}

\def\N{\mathbb N}
\def\bbc{\mathbb C}
\def\:{\colon}

\newcommand{\fg}{\mathfrak{g}}

\def \fg{\mathfrak{g}}

\def\W{\widetilde{W}}
\def \fh{\mathfrak{h}}

\def\w{\widetilde{w}}
\def \fa{\mathfrak{a}}
\def \fh{\mathfrak{h}}

\def \fa{\mathfrak{a}}

\def \fn{\mathfrak{n}}

\def\C{{\mathbb C}}

\def\Z{{\mathbb Z}}
\def\bbz{{\mathbb Z}}

\def\N{{\mathbb N}}

\def\bu{\textbf{U}}
\def\f{\mathcal{F}}
\newcommand{\supp}{\operatorname{supp}}
\newcommand{\lie}[1]{\mathfrak{#1}}\def\span{\textnormal{span}}
\def\m{\mathcal{M}}
\def\bb{\textbf{B}}

\def\comp{\textnormal{comp}}

\def\ts{\tilde{S}}

\def\inv{\textnormal{Inv}}
\baselineskip=11pt

\begin{document}
\normalsize

\title[On the Structure of a quotient of the global Weyl module for the map superalgebra $\lie{sl}(2,1)$]{On the Structure of a quotient of the global Weyl module for the map superalgebra $\lie{sl}(2,1)$}

\author{Irfan Bagci}
\address{Department of Mathematics \\
University of North Georgia \\
Oakwood, GA 30566}
\email{irfan.bagci@ung.edu}

\author{Samuel Chamberlin}
\address{Department of Mathematics\\
Park University\\ 
Parkville, MO 64152}
\email{samuel.chamberlin@park.edu}

\begin{abstract}
Let $A$ be a commutative, associative algebra with unity over $\bbc$. Using the definition of global Weyl modules for the map superalgebras given by Calixto, Lemay, and Savage, \cite{CLS}, we explicitly describe the structure of certain quotients of the global Weyl modules for the map superalgebra $\lie{sl}(2,1)\otimes A$. We also give a nice basis for these modules. This work is an extension of Theorem 3 in \cite{FL} and can naturally be extended to similar quotients of the global Weyl modules for $\lie{sl}(n,m)\otimes A$.
\end{abstract}

\maketitle

\section{Introduction}
Global Weyl modules for the loop algebras, $\fg \otimes \C[t , t^{-1}]$, of a finite-dimensional simple complex Lie algebra, $\fg$, were introduced by Chari and Pressley in \cite{CP}.  These modules are indexed by the dominant integral weights of $\fg$, and can also be defined as projective objects in the category of those $\fg \otimes \C[t , t^{-1}]$-modules whose weights are bounded by some fixed dominant integral weight of $\fg$. There are several approaches to generalizing the above objects. In \cite{FL} global Weyl modules were defined in the setting where $\C[t , t^{-1}]$ is replaced by the coordinate ring of a complex affine variety. A more general approach was taken in \cite{CFK}. There the modules for the map algebras, $\fg \otimes A$, where $A$ is a commutative, associative complex unital algebra, were studied.

Simple finite dimensional modules for $\lie{sl}(m,n)\otimes A$ were classified by  Eswara Rao   in the multiloop case in \cite{ER}. These modules were classified for $\lie{g}\otimes A$ by Savage in \cite{Sav}. The Weyl modules for for $\lie{g}\otimes A$ were defined by Calixto, Lemay, and Savage  in \cite{CLS}. In \cite{BC, BC1} the authors defined integral forms and gave integral bases for the universal enveloping algebras of $\lie{g}\otimes A$.  The aim of this paper is to use tools developed in these papers to illuminate the structure of a quotient of the global Weyl module for the map superalgebra $\mathfrak{s}l(2,1)\otimes A$.

This paper is organized as follows:  In Section 2 we fix some notation and record the properties we are going to need in the rest of the paper. Then in Section 3 we state the main result of the paper. In Section 4 we prove the main result of the paper.

\section{Preliminaries}
\subsection{}For the remainder of this work fix $\fg:=A(1,0)=\mathfrak{sl}(2,1)$. Recall that $\fg=\mathfrak{sl}(2,1)$ consists of $3\times3$ block  matrices of the form
\[\begin{bmatrix}
\begin{array}{c c |c}
a & b & c\\
d & e & f\\\hline
g & h & a+e
\end{array}
\end{bmatrix}\]
where $a,b,c,d,e,f,g,h,\in\C$.
%$\fg = \fg_{-1}\oplus \fg_{0} \oplus \fg_{1}$. Then $\fg_0 = <e_{22}-e_{33}, e_{11}+e_{22}, e_{23}, e_{32}>$,
%$\fg_1 = <e_{12}, e_{13}>$, and $\fg_{-1}= <e_{21}, e_{31}>$, where $e_{ij}$ denotes $ij$-matrix unit. This gives a triangular decomposition of $\fg$,
%where $\h = <e_{22}-e_{33}, e_{11}+e_{22}>$ is a Cartan subalgebra  and $\n = <e_{23}, e_{12}, e_{13}>$. Also note that $\n_0= <e_{23}>$ and $\n_1= <e_{12}, e_{13}>$.

Given a matrix $\left[a_{i,j}\right]$ define $\varepsilon_k\left(\left[a_{i,j}\right]\right):=a_{k,k}-a_{k+1,k+1}$. Define $\alpha_1:=\varepsilon_1-\varepsilon_2$ and $\alpha_2:=\varepsilon_2-\delta_1$, where $\delta_1:=\varepsilon_3$. Fix the distinguished simple root system $\Delta:=\{\alpha_1,\alpha_2\}$. Then $R_{0}^+:=\{\alpha_1\}$ and $R_{1}^+:=\{\alpha_2,(\alpha_1+\alpha_2)\}$. Fix the following Chevalley basis for $\lie{sl}(2,1)$: $h_1:=h_{\alpha_1}=e_{1,1}-e_{2,2}$ and, $h_2:=h_{\alpha_2}=e_{2,2}+e_{3,3}$. For the root vectors take $x_1:=x_{\alpha_1}=e_{1,2}$, $x_{-1}:=x_{-\alpha_1}=e_{2,1}$, $x_2:=x_{\alpha_2}=e_{2,3}$, $x_{-2}:=x_{-\alpha_2}=e_{3,2}$, and $x_3:=x_{\alpha_1+\alpha_2}=e_{1,3}$, $x_{-3}:=x_{-(\alpha_1+\alpha_2)}=e_{3,1}$. Define $h_3:=h_1+h_2$ and note that $[x_3,x_{-3}]=h_3$.

We have the following table for the bracket in $\lie{g}$.

\[\begin{tabular}{|c|c|c|c|c|c|c|c|c|}\hline
     	& $x_1$	& $x_2$ 	& $x_3$ 	& $h_1$ 	& $h_2$ 	& $x_{-1}$   & $x_{-2}$  & $x_{-3}$  \\ \hline
$x_1$	& 0    	& $x_3$ 	& 	0 	& $-2x_1$   & $x_1$ 	& $h_1$  	& 0     	& $-x_{-2}$ \\ \hline
$x_2$	& $-x_3$   & 0     	&	0  	& $x_2$ 	& 0     	& 0      	& $h_2$ 	& $x_{-1}$  \\ \hline
$x_3$	& 	0	& 0     	&   0   	& $-x_3$	& $x_3$ 	& $-x_2$ 	& $x_1$ 	& $h_3$ \\ \hline
$h_1$	& $2x_1$   & $-x_2$	& $x_3$ 	& 0     	& 0     	& $-2x_{-1}$ & $x_{-2}$  & $-x_{-3}$ \\ \hline
$h_2$	& $-x_1$   &   0   	& $-x_3$	& 0     	& 0     	& $x_{-1}$   & 0     	& $x_{-3}$  \\ \hline
$x_{-1}$ & $-h_1$   &  0    	& $x_2$ 	& $2x_{-1}$ & $-x_{-1}$ & 0      	& $-x_{-3}$ & 0     	\\ \hline
$x_{-2}$ & 0    	&  $h_2$	& $x_1$ 	& $-x_{-2}$ & 0     	& $x_{-3}$   & 0     	& 0     	\\ \hline
$x_{-3}$ & $x_{-2}$ &  $x_{-1}$ & $h_3$ & $x_{-3}$  & $-x_{-3}$ & 0      	& 0     	& 0     	\\ \hline
\end{tabular}\]

Define nilpotent sub-superalgebras $\lie n^\pm:=\bbc x_{\pm1}\oplus\bbc x_{\pm2}\oplus\bbc x_{\pm3}$ and note that $\lie g=\lie n^-\oplus \lie h\oplus \lie n^+.$ Note $\lie n^\pm=(\lie n^\pm)_0 \oplus (\lie n^\pm)_1$ where $(\lie n^\pm)_0=\mbox{span}\{x_{\pm1}\}$ and $(\lie n^\pm)_1=\mbox{span}\{x_{\pm2},x_{\pm3}\}$.

Note that $\{x_{-1},h_1,x_1\}$ is an $\mathfrak{sl}_2$-triple.

\subsection{}

Fix a commutative associative unitary algebra $A$ over $\C$. $\bb$ will denote a fixed $\C$--basis of $A$. The \emph{map superalgebra} of $\lie{g}$ is the $\Z_2$-graded vector space  $\lie{g}\otimes A$, where   $(\lie{g}\otimes A)_0 := \lie{g}_0\otimes A $ and  $(\lie{g}\otimes A)_1:= \lie{g}_1\otimes A $, with bracket given by linearly extending the bracket
$$[z\otimes a, z'\otimes b]:=[z,z']_{\lie{g}}\otimes ab,\ z,z'\in\fg,\ a,b\in A,$$
where $ [ , ]_{\lie{g}}$  denotes the super commutator bracket in $\fg$, see also \cite{BC,Sav}. $\fg$ is embedded in $\fg\otimes A$ as $\fg\otimes1$. Let $\bu(\fg\otimes A)$ denote the universal enveloping algebra of $\fg\otimes A$. Given $u\in\bu(\fg\otimes A)$ and $r\in\Z_{\geq0}$ define
$$u^{(r)}:=\frac{u^r}{r!}\textnormal{ and }\binom{u}{r}:=\frac{u(u-1)\cdots(u-r+1)}{r!}.$$
Given any Lie superalgebra $\fa$ define $T^0(\fa):=\C$, and for all $j\geq1$, define $T^j(\fa):=\fa^{\otimes j}$, $T(\fa):=\bigoplus_{j=0}^\infty T^j(\fa)$, and $T_j(\fa):=\bigoplus_{k=0}^jT^k(\fa)$. Then set $\bu_j(\fa)$ to be the image of $T_j(\fa)$ under the canonical surjection $T(\fa)\to\bu(\fa)$. For any $u\in\bu(\fa)$ \emph{define the degree of $u$} by $$\deg u:=\min_{j}\{u\in\bu_j(\fa)\}$$

\subsection{Multisets and $p(\varphi_1,\varphi_2,\xi)$}

Given any set $S$ define a \emph{multiset of elements of $S$} to be a multiplicity function $\chi:S\to\Z_{\geq0}$. Define $\f(S):=\{\chi:S\to\Z_{\geq0}:|\supp\chi|<\infty\}$. For $\chi\in\f(S)$ define $|\chi|:=\sum_{s\in S}\chi(s)$. Notice that $\f(S)$ is an abelian monoid under function addition. Define a partial order on $\f(S)$ so that for
$\psi,\chi\in\f(S)$, $\psi\leq\chi$ if $\psi(s)\leq\chi(s)$ for all $s\in S$. Define $\f_k(S):=\{\chi\in\f(S):|\chi|=k\}$ and given $\chi\in\f(S)$ define $\f(S)(\chi):=\{\psi\in\f(S):\psi\leq\chi\}$ and $\f_k(S)(\chi):=\{\psi\in\f(S)(\chi):|\psi|=k\}$. In the case $S=A$ the $S$ will be omitted from the notation. So that $\f:=\f(A)$, $\f_k:=\f_k(A)$, $\f(\chi):=\f(A)(\chi)$ and $\f_k(\chi):=\f_k(A)(\chi)$.

If $\psi\in\f(\chi)$ we define $\chi-\psi$ by standard function subtraction. Also define functions $\pi:\f-\{0\}\to A$ by
$$\pi(\psi):=\prod_{a\in A}a^{\psi(a)}$$
and extend $\pi$ to $\f$ be setting $\pi(0)=1$. Define $\m:\f\to\Z$ by
$$\m(\psi):=\frac{|\psi|!}{\prod_{a\in A}\psi(a)!}$$
For all $\psi\in\f$, $\m(\psi)\in\Z$ because if $\supp\psi=\{a_1,\ldots,a_k\}$ then $\m(\psi)$ is the multinomial coefficient
$$\binom{|\psi|}{\psi(a_1),\ldots,\psi(a_k)}$$

For $s\in S$ define $\chi_s$ to be the characteristic function of the set $\{s\}$. Then for all $\chi\in\f(S)$
$$\chi=\sum_{s\in S}\chi(s)\chi_s$$

Define $A^0=\{\varnothing\}$ and for $n\in\Z_{>0}$ view $A^n$ as the set of functions $\{1,\ldots,n\}\to A$. Also given $\xi\in A^n$ and $j\in\{1,\ldots,n\}$ define $\xi-\xi(j)=(\xi(1),\ldots,\xi(j-1),\xi(j+1),\ldots,n)\in A^{n-1}$ (if $n=1$ then $\xi-\xi(j)=\varnothing$) and $|\xi|$=n.

Given $i\in\{1,2,3\}$, $j\in\{2,3\}$ and $S\subset A$ define $X_{\pm1}:\f(S)\to\bu(\fg\otimes A)$ and $H_i:\f(S)\to\bu(\fg\otimes A)$, and $X_{\pm j}:A^n\to\bu(\fg\otimes A)$ by $X_{\pm1}(0):=1$, $H_i(0):=1$, $X_{\pm j}(\varnothing):=1$, and for $\chi,\varphi\in \f(S)-\{0\}
$ and $n>0$,
$$X_{\pm1}(\chi):=\prod_{a\in\supp\chi}\left(x_{\pm1}\otimes a\right)^{(\chi(a))}$$
$$H_i(\varphi):=\prod_{b\in\supp\varphi}\left(h_i\otimes b\right)^{(\varphi(b))}$$
$$X_{\pm j}(\xi):=\prod_{k=1}^n\left(x_{\pm j}\otimes\xi(k)\right)$$

Recursively define functions $p_1,q_1:\f^2\to\bu(x_{-1}\otimes A)\bu(h_1\otimes A)$ and $p:\f^2\times A^n\to\bu(x_{-1}\otimes A)\bu(h_1\otimes A)\Lambda(x_{-3}\otimes A)$ by
$$p_1(\varphi,\chi):=\left\{\begin{array}{cc}1,&\varphi=\chi=0\\
-\frac{1}{|\chi|}\sum_{\psi\in\f(\chi)-\{0\}}\m(\psi)\left(h_1\otimes\pi(\psi)\right)p_1(0,\chi-\psi),&\chi\neq0,\ \varphi=0\\
-\frac{1}{|\varphi|}\sum_{\substack{d\in\supp\varphi\\ \psi\in\f(\chi)}}\m(\psi)\left(x_{-1}\otimes d\pi(\psi)\right)p_1(\varphi-\chi_d,\chi-\psi)&\varphi\neq0
\end{array}\right.$$
$$q_1(\varphi,\chi):=\left\{\begin{array}{cc}0,&|\varphi|<|\chi|\\
1,&\varphi=\chi=0\\
\frac{1}{|\chi|}\sum_{\substack{c\in\supp\chi\\ d\in\supp\varphi}}(h_1\otimes cd)q_1(\varphi-\chi_d,\chi-\chi_c),&|\chi|=|\varphi|>0\\
\sum_{\phi\in\f_{|\chi|}(\varphi)}X_{-1}(\varphi-\phi)q_1(\phi,\chi),&|\varphi|>|\chi|
\end{array}\right.$$
and $p(\varphi_1,\varphi_2,\xi):=p_1(\varphi_2,\varphi_1)X_{-3}(\xi)$.

\begin{prop}\label{degp}
Let $S\subset A$, $\chi,\varphi,\psi,\phi\in\f(S)$, with $|\phi|\geq|\psi|>0$, $i,j,k,n,r\in\Z_{\geq0}$ with $r\geq|\chi|>0$, $k\leq\chi(1)$, and $\xi\in A^n$.
Then
\begin{enumerate}

\item $X_1\left(\chi\right)X_{-1}\left(r\chi_1\right)-(-1)^{r}p_1\left((r-|\chi|)\chi_1,\chi\right)\in\bu(\fg\otimes A)(x_1\otimes A)$,

\item
$X_1(\psi)X_{-1}(\phi)-(-1)^{|\psi|}q_1(\phi,\psi)\in \left(\bu(\fg\otimes A)(x_1\otimes A)+\bu_{|\phi|-|\psi|}(x_{-1}\otimes A)\bu_{|\psi|-1}(h_1\otimes A)\right)$,

\item
$p_1(\phi,\chi)-(-1)^{|\chi|+|\phi|}X_{-1}(\phi)H_1(\chi)\in\bu_{|\phi|}(x_{-1}\otimes A)\bu_{|\chi|-1}(h_1\otimes A)$,

\item
$q_1(r\chi_1,\chi)=(x_{-1})^{(r-|\chi|)}H_1(\chi)$,

\item
$q_1(\varphi+r\chi_1,\chi)-\displaystyle\binom{\varphi(1)+r-|\chi|}{\varphi(1)}(x_{-1})^{(r-|\chi|)}X_{-1}(\varphi)H_1(\chi)$
$$\in\sum_{s=r-|\chi|+1}^{\min\{r,r-|\chi|+|\varphi|-\varphi(1)\}}(x_{-1})^{(s)}\bu_{|\varphi|+r-s-|\chi|}(x_{-1}\otimes A)\bu_{|\chi|}(h_1\otimes A)$$

\item
\begin{eqnarray*}
p_1(\phi,\chi)&=&\frac{(-1)^{k}}{\binom{\chi(1)}{k}}p_1(\phi,\chi-k\chi_1)\binom{h_1-|\phi|-|\chi|+k}{k}
\end{eqnarray*}

\item
$$\binom{h_1-i}{j}X_{-3}(\xi)=X_{-3}(\xi)\binom{h_1-i-n}{j}$$

\end{enumerate}
\end{prop}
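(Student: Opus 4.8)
The plan is to prove the seven statements largely in the order given, since several of them feed into each other, with the recursions defining $p_1$ and $q_1$ as the main engine. Throughout I would work modulo the left ideal $\bu(\fg\otimes A)(x_1\otimes A)$ and the filtration pieces $\bu_j(x_{-1}\otimes A)\bu_k(h_1\otimes A)$, so the arguments are really about controlling degrees after moving raising operators to the right using the $\lie{sl}_2$-triple relations from the bracket table.

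For (1), I would induct on $|\chi|$ and then on $r-|\chi|$, expanding $X_1(\chi)X_{-1}(r\chi_1)$ by pushing one factor $x_1\otimes a$ through the string of $x_{-1}\otimes 1$'s: each commutator $[x_1\otimes a, x_{-1}\otimes 1]=h_1\otimes a$, and iterating produces exactly the weighted sums over $\psi\in\f(\chi)$ with the multinomial coefficients $\m(\psi)$ that appear in the definition of $p_1$; the sign $(-1)^r$ and the factor $\tfrac{1}{|\chi|}$ (resp.\ $\tfrac{1}{|\varphi|}$) come from normalizing divided powers. Statement (3) is then a "leading term" refinement of the same computation: in the recursion for $p_1(\phi,\chi)$ the top-degree-in-$x_{-1}$ and top-degree-in-$h_1$ part is precisely $(-1)^{|\chi|+|\phi|}X_{-1}(\phi)H_1(\chi)$, and everything dropped in passing from the recursion to this monomial lies in strictly lower $h_1$-degree, i.e.\ in $\bu_{|\phi|}(x_{-1}\otimes A)\bu_{|\chi|-1}(h_1\otimes A)$; this is an induction on $|\chi|$ unwinding the first branch of the recursion. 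Statement (2) for $q_1$ is the analogous but more delicate bookkeeping when $|\phi|>|\psi|$: here the $X_1(\psi)X_{-1}(\phi)$ product, reduced mod the left ideal, leaves both a term matching $q_1(\phi,\psi)$ and an error living in $\bu_{|\phi|-|\psi|}(x_{-1}\otimes A)\bu_{|\psi|-1}(h_1\otimes A)$, matching the fourth branch of the $q_1$ recursion; I would induct on $|\phi|-|\psi|$, using the case $|\phi|=|\psi|$ (third branch) as the base, which itself follows from (1) specialized appropriately.

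Statements (4) and (5) are explicit evaluations of $q_1$ on arguments of the form $r\chi_1$ (resp.\ $\varphi+r\chi_1$); for (4) I would induct on $|\chi|$, applying the third branch of the recursion and recognizing the emerging product as $(x_{-1})^{(r-|\chi|)}H_1(\chi)$ after collecting divided powers, and (5) is the same induction carried out while tracking the combinatorial coefficient $\binom{\varphi(1)+r-|\chi|}{\varphi(1)}$ that arises from distributing the extra $x_{-1}\otimes 1$ factors among $\varphi$ and the divided power — the "error" sum over $s$ in (5) exactly records the terms where more than the expected number of $x_{-1}\otimes 1$'s get absorbed into the divided power. Statement (6) is a commutation identity: since $[h_1, x_{-1}]=-2x_{-1}$, conjugating the polynomial $\binom{h_1 - i}{j}$ in $h_1$ past a factor $X_{-1}$ (or past $k$ copies of $x_{-1}\otimes 1$ hidden in $\chi(1)$) shifts the argument of $h_1$, and (6) packages the resulting identity between $p_1(\phi,\chi)$ and $p_1(\phi,\chi-k\chi_1)$; I would prove it by induction on $k$, reducing to $k=1$, where it is a direct manipulation using branch two of the $p_1$ recursion together with the scalar identity $\binom{n}{1}^{-1}\binom{h_1-i}{1}(\cdots)$. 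Finally (7) is the cleanest: because $[h_1, x_{-3}] = x_{-3}$ (read off the table), we get $(h_1 - i)(x_{-3}\otimes a) = (x_{-3}\otimes a)(h_1 - i - 1)$, and multiplying $n$ such factors gives the shift $h_1 - i \mapsto h_1 - i - n$; since $\binom{h_1-i}{j}$ is a polynomial in $h_1$, the identity follows termwise, and one checks that $x_{-3}$ being odd causes no sign issue here because $h_1$ is even.

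The main obstacle I expect is statement (5): unlike (4), it is not a clean closed form but an identity modulo an explicitly described filtration, and the induction requires simultaneously (a) isolating the principal term with its binomial coefficient and (b) verifying that every term produced by the recursion that is not the principal term actually lands in the claimed sum $\sum_{s}(x_{-1})^{(s)}\bu_{\ast}(x_{-1}\otimes A)\bu_{|\chi|}(h_1\otimes A)$ with the correct range of $s$. Getting the endpoints $r-|\chi|+1$ and $\min\{r, r-|\chi|+|\varphi|-\varphi(1)\}$ exactly right is the delicate combinatorial point, and I would handle it by carefully splitting $\varphi = \varphi(1)\chi_1 + \varphi'$ with $\supp\varphi' \cap \{1\} = \varnothing$ and tracking how many of the $r$ copies of $x_{-1}\otimes 1$ merge with the divided power versus with $\varphi'$, using (4) as the base case where $\varphi' = 0$.
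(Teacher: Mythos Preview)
Your proposal is correct and follows essentially the same inductive strategy as the paper's (very terse) proof: parts (1)--(2) the paper simply attributes to \cite[Lemma~5.4]{C}, but your commutator-pushing induction is exactly what underlies that reference, and for (3)--(7) the paper likewise prescribes the same inductions you outline (on $|\chi|$ then $|\phi|$ for (3), reducing (5) to (4), induction on $k$ for (6), and on $n$ then $j$ for (7)). One small slip to fix: from the bracket table $[h_1,x_{-3}]=-x_{-3}$, not $+x_{-3}$ as you wrote, although the commutation formula $(h_1-i)(x_{-3}\otimes a)=(x_{-3}\otimes a)(h_1-i-1)$ that you derived from it is nonetheless the correct one.
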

\begin{proof}
(1) and (2) follow from \cite[Lemma 5.4]{C}. (3) can be proved by induction first on $|\chi|$ with $\phi=0$ and then on $|\phi|$. (4) can be proved in the case $r=|\chi|$ by induction on $r$. Then the case $r>|\chi|$ follows. (5) can be proved using (4). (6) can be proved by induction on $k$. (7) can be shown first by induction on $n$ with $j=1$ and then by induction on $j$.
\end{proof}

\begin{rem}
Note that Proposition \ref{degp} (3) implies that $p_1(\varphi,\chi)\in\bu_{|\varphi|}(x_{-1}\otimes A)\bu_{|\chi|}(h_1\otimes A)$ and $p(\varphi_1,\varphi_2,\xi)\in\bu_{|\varphi_2|}(x_{-1}\otimes A)\bu_{|\varphi_1|}(h_1\otimes A)\Lambda_{|\xi|}(x_{-3}\otimes A)$.
\end{rem}

\section{Main Theorem}
In this section, we define the global Weyl module for map superalgebra $\lie{sl}(2,1)\otimes A$ as in \cite{CLS}.

\subsection{}

Recall that we have fixed a triangular decomposition $\fg = \fn^{-}\oplus \fh \oplus \fn^{+}$,

where  $\lie n^\pm:=\bbc x_{\pm1}\oplus\bbc x_{\pm2}\oplus\bbc x_{\pm3}$ ,   $\lie n^\pm=(\lie n^\pm)_{0} \oplus (\lie n^\pm)_{1}$ , $(\lie n^\pm)_{0}=\mbox{span}\{x_{\pm1}\}$ and $(\lie n^\pm)_{1}=\mbox{span}\{x_{\pm2},x_{\pm3}\}$, $\fh = \mbox{span}\{h_1, h_2\}, h_3= h_1+h_2$.

\begin{defn}
Define the \emph{global Weyl--module with highest weight $\lambda$}, $W_A(\lambda)$, to be the $\fg\otimes A$--module generated by a vector $w_{\lambda}$ with defining relations
$$(\lie n^+\otimes A)w_{\lambda}=0,\hskip.2in hw_{\lambda}=\lambda(h)w_{\lambda},\hskip.2in (x_{-1})^{\lambda(h_1)+1}w_\lambda=0$$
for all $h\in \fh$.
\end{defn}

In other words  $W_A(\lambda)$ is the quotient of $U(\fg\otimes A)$ by the ideal J, where $J$ is generated by 
$$\left\{ x\otimes a,\ h-\lambda(h),\  (x_{-1})^{\lambda(h_1)+1}\bigg|\ x \in \mathfrak{n}^+,\ h\in\fh,\ a\in A\right\}.$$
The quotient of the Weyl module $W_A(\lambda)$ we will study in this work, $\W_A(\lambda)$ has the additional relation $X_{-2}(\xi)w_{\lambda}=0$ where $\xi\in A^{\lambda{h_2}+1}$.

So that $\W_A(\lambda)$ is the $\fg\otimes A$--module generated by a vector $\w_{\lambda}$ with defining relations
$$(\lie n^+\otimes A)\w_{\lambda}=0,\hskip.2in h\w_{\lambda}=\lambda(h)\w_{\lambda},\hskip.2in (x_{-1})^{\lambda(h_1)+1}\w_\lambda=0,\hskip.2in X_{-2}(\xi)\w_{\lambda}=0$$
for all $h\in \fh$ and $\xi\in A^{\lambda{h_2}+1}$.

Define $\omega_k\in\lie h^*$ by $\omega_k(h_i):=\delta_{k,i}$ for all $i,k\in\{1,2\}$. Note that $\{\omega_1,\omega_2\}$ is a basis for $\lie{h}^*$.

The main result of this work is Theorem \ref{thm} in this section.

\subsection{}

Define $V\cong\C^3$ to be the natural module for $\fg$, and write the basis as $v_1$, $v_2:=x_{-1}v_1$, and $v_3:=x_{-3}v_1$. Then $V\otimes A$ is a $\fg\otimes A$ module under the action $(z\otimes a)(w\otimes b)=zw\otimes ab$.

Note that $V=V_0\oplus V_1$ where $V_0=\C v_1\oplus\C v_2$ and $V_1=\C v_3$. So $V\otimes A=(V_0\otimes A)\oplus(V_1\otimes A)$.

Following \cite{Mus}, we define an action of the symmetric group, $S_m$, on $T^m(V\otimes A)$ as follows. Given $\sigma\in S_m$ define $\inv(\sigma):=\{(j,k)\ |\ j<k,\ \sigma(j)>\sigma(k)\}$. Then given $w=(w_1,\ldots,w_m)\in T^m(V\otimes A)$ such that $w_1,\ldots, w_m$ are homogeneous in $V\otimes A$, define $\gamma(w,\sigma)\in\{\pm1\}$ by
$$\gamma(w,\sigma)=\prod_{(j,k)\in\inv(\sigma)}(-1)^{\bar{w}_{\sigma(j)}\bar{w}_{\sigma(k)}}$$
Then $S_m$ acts on $T^m(V\otimes A)$ by linearly extending the map
$$\sigma^{-1}(w_1\otimes\dots\otimes w_m)=\gamma(w,\sigma)w_{\sigma(1)}\otimes\dots\otimes w_{\sigma(m)}$$

Define a subspace $\ts^m(V\otimes A)\subset T^m(V\otimes A)$ to be the $\C$--span of the set
$$\left\{\sum_{\sigma\in S_m}\sigma^{-1}(w_1\otimes\cdots\otimes w_m)\ \bigg|\ w_1,\ldots,w_m\in V\otimes A\right\}$$
Given $u\in V\otimes A$ define $u^{\otimes m}:=\underbrace{u\otimes u\otimes\dots\otimes u}_m$ and $u^{\otimes(m)}:=\frac{1}{m!}\underbrace{u\otimes u\otimes\dots\otimes u}_m$.

Note that, for all $m\in\Z_{\geq0}$, $T^m(\bu(\fg\otimes A))$ has a braided algebra structure with factors of odd degree skew-commuting. Define the coproduct
$$\bar{\Delta}^{m}:\bu(\fg\otimes A)\to T^{m+1}\left(\bu(\fg\otimes A)\right)$$
by extending the map $\fg\otimes A\to T^{m+1}\left(\bu(\fg\otimes A)\right)$ given by
$$z\mapsto\sum_{j=0}^m1^{\otimes j}\otimes z^{\otimes(m-j)}$$
to a superalgebra homomorphism. In particular for $u,w\in\bu(\fg\otimes A)$ with $$\Delta^1(u)=\sum u_1\otimes u_2,\textnormal{ and }\Delta^1(w)=\sum w_1\otimes w_2$$
$$\Delta^1(uw)=\sum(-1)^{|u_2||w_1|}u_1w_1\otimes u_2w_2$$
Define
$$\rho:T^m(\bu(\fg\otimes A))\to\textnormal{End }T^m(V\otimes A)$$
to be the module action given by
$$\rho(u_1,\ldots,u_m)(z_1,\ldots,z_m)=(-1)^{\sum_{j=2}^m\sum_{k=1}^{j-1}|z_j||u_k|}(u_1z_1)\otimes\dots\otimes(u_mz_m)$$
Then $T^m(V\otimes A)$ is a left $\bu(\fg\otimes A)$-module via the map $\rho\circ\Delta^{m-1}$. Moreover $\ts^m(V\otimes A)$ is a submodule under this action. Thus $\ts^m(V\otimes A)$ is a left $\bu(\fg\otimes A)$-module under this action.

Define $v:=(v_1)^{\otimes m}\in\ts^m(V\otimes A)$. The goal of this work is to prove the following Theorem.
\begin{thm} \label{thm}
%$(a)$
The assignment $\w_{m\omega_1}\mapsto v$ extends to an isomorphism $\Phi:\W_A(m\omega_1)\to\ts^m(V\otimes A)$ of  $\bu(\fg\otimes A)$--modules. Moreover the set
$$\B:=\left\{p(\varphi_1,\varphi_2,\xi)\w_{m\omega_1}\ \big|\ \varphi_1,\varphi_2\in\f(\bb),\ \xi\in\bb^n,\ |\varphi_1|+|\varphi_2|+n=m\right\}$$ is a $\C$-basis for $\W_A(m\omega_1)$.\\

%$(b)$ The assignment $w_{(0,m)}\mapsto v_{(0,1)}$ extends to an isomorphism %$W_{A}(0,m)\to V(0,1)\otimes A$ of  $\bu(\fg\otimes A)$--modules .\\
%,\ba_{(m,0)})$-bimodules
\end{thm}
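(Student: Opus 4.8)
The strategy is to build the map $\Phi$ by a universal-property argument, establish surjectivity by a direct generation statement, and then prove injectivity by exhibiting a spanning set of the target of size matching $|\B|$ together with a linear-independence argument. First I would check that $v=(v_1)^{\otimes m}\in\ts^m(V\otimes A)$ satisfies the defining relations of $\W_A(m\omega_1)$, so that $\w_{m\omega_1}\mapsto v$ extends to a $\bu(\fg\otimes A)$-module homomorphism $\Phi$. Concretely: $v$ is a weight vector of weight $m\omega_1$ because $h_1 v_1=v_1$ and $h_2 v_1=0$, so $h_i$ acts on $v$ by $\Delta^{m-1}$ as $m\delta_{i,1}$; each $x_k\otimes a$ ($k\in\{1,2,3\}$) kills $v_1$, hence by the coproduct formula kills $v$; $(x_{-1})^{m+1}v=0$ since $x_{-1}$ lowers the number of $v_1$-tensorands and there are only $m$ of them (more precisely $(x_{-1}\otimes 1)$ acts as $\sum_j 1^{\otimes(j-1)}\otimes x_{-1}\otimes 1^{\otimes(m-j)}$ up to signs, and $v_1,v_2$ are even so no signs intervene, giving the usual $\mathfrak{sl}_2$ computation); and $X_{-2}(\xi)v=0$ for $\xi\in A^{\lambda(h_2)+1}=A^{1}$ — indeed $x_{-2}=e_{3,2}$ and $x_{-2}v_1=0$, so already a single factor $x_{-2}\otimes\xi(1)$ annihilates $v$.

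Next, surjectivity of $\Phi$. It suffices to show $\bu(\fg\otimes A)\cdot v=\ts^m(V\otimes A)$. I would argue that the cyclic vector $v$ generates: using the root vectors $x_{-1}\otimes a$ and $x_{-3}\otimes a$ one produces all simple tensors of the basis vectors $v_1,v_2=x_{-1}v_1,v_3=x_{-3}v_1$ with arbitrary algebra coefficients, and the symmetrization defining $\ts^m$ is exactly what the coproduct action produces; this is the point where the description of $\ts^m(V\otimes A)$ as a span of symmetrized pure tensors is used, mirroring the classical $\mathfrak{sl}_2$ case and the super-analogue in \cite{Mus}. In parallel — and this is the heart of the argument — I would show that $\B$ spans $\W_A(m\omega_1)$. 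By a PBW argument in $\bu(\fg\otimes A)$, every element of $\W_A(m\omega_1)$ is a combination of monomials in $x_{-1}\otimes A$, $h_1\otimes A$, $x_{-2}\otimes A$, $x_{-3}\otimes A$ applied to $\w_{m\omega_1}$ (the positive part and $h_2$-terms act by the defining relations). The relations $(x_{-1})^{m+1}\w=0$ and $X_{-2}(\xi)\w=0$ with $|\xi|=1$ collapse this: the latter removes all $x_{-2}\otimes A$ factors outright, and Proposition \ref{degp}, especially parts (1), (3), (4), (5), (6) together with the definition $p(\varphi_1,\varphi_2,\xi)=p_1(\varphi_2,\varphi_1)X_{-3}(\xi)$, lets one rewrite any remaining monomial in $x_{-1}\otimes A$, $h_1\otimes A$, $x_{-3}\otimes A$ as a linear combination of the $p(\varphi_1,\varphi_2,\xi)$ with $|\varphi_1|+|\varphi_2|+n\le m$; the highest-weight and $(x_{-1})^{m+1}$ relations then force $|\varphi_1|+|\varphi_2|+n=m$. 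This is precisely the superalgebra analogue of the straightening arguments of \cite{C} and Theorem 3 of \cite{FL}, with the fermionic variable $x_{-3}$ handled by the exterior-algebra factor $\Lambda(x_{-3}\otimes A)$.

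Finally, injectivity. Having a surjection $\Phi$ and a spanning set $\B$ of $\W_A(m\omega_1)$, it is enough to prove that the images $\Phi\bigl(p(\varphi_1,\varphi_2,\xi)\w_{m\omega_1}\bigr)=p(\varphi_1,\varphi_2,\xi)\,v$, for $(\varphi_1,\varphi_2,\xi)$ ranging over the indexing set with $|\varphi_1|+|\varphi_2|+n=m$, are linearly independent in $\ts^m(V\otimes A)$. For this I would compute these vectors explicitly: applying $p_1(\varphi_2,\varphi_1)$ to $v$ distributes $x_{-1}$'s and $h_1$'s over the $m$ tensor slots, and $X_{-3}(\xi)$ converts the requisite slots from $v_1$ to $v_3$; the resulting symmetrized tensors, indexed by how the basis monomials $\pi(\psi)$-type coefficients and the $\xi$-entries are distributed, are distinct basis-like elements of a standard basis of $\ts^m(V\otimes A)$ (a symmetric-power basis: multisets of pairs (basis vector of $V$, basis element of $A$), with the $V_1=\C v_3$ direction antisymmetrized). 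Matching up the combinatorics shows the transition matrix from $\B$ to this standard basis is unitriangular with respect to a suitable order on $\f(\bb)^2\times\bb^n$, hence invertible. Therefore $\Phi$ is injective, hence an isomorphism, and $\B$ — being a spanning set mapped bijectively onto a basis — is a basis of $\W_A(m\omega_1)$.

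\textbf{Main obstacle.} The bookkeeping in the spanning step is the crux: one must verify that Proposition \ref{degp} genuinely suffices to reduce an arbitrary PBW monomial to the $p(\varphi_1,\varphi_2,\xi)$ form modulo the two extra relations, keeping careful track of the degree filtrations $\bu_j$ and the exterior-algebra truncation $\Lambda_{|\xi|}$, and — more subtly — handling the Koszul signs coming from the odd elements $x_{-2},x_{-3}$ throughout the coproduct and module-action formulas. The linear-independence step is then comparatively mechanical once the right monomial basis of $\ts^m(V\otimes A)$ and the right term order are chosen, but getting that order to make the transition matrix triangular is where the proof must be written with care.
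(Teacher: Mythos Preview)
Your overall architecture is the paper's: verify that $v$ satisfies the defining relations so $\Phi$ exists, prove that the $p(\varphi_1,\varphi_2,\xi)\w_{m\omega_1}$ span $\W_A(m\omega_1)$, and check that their images in $\ts^m(V\otimes A)$ are linearly independent. Two points of comparison are worth recording.

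First, the independence step is sharper than the unitriangular argument you propose. The paper computes (Proposition~\ref{pv}, built from Lemmas~\ref{Deltap}, \ref{p1v1}, \ref{p1v}) that
\[
p(\varphi_1,\varphi_2,\xi)\,v \;=\; (-1)^{|\varphi_1|+|\varphi_2|}\,v(\varphi_1,\varphi_2,\xi)
\]
exactly, so the transition matrix to the explicit basis $\{v(\varphi_1,\varphi_2,\xi)\}$ of $\ts^m(V\otimes A)$ given in Lemma~\ref{tsbasis} is diagonal up to sign, not merely triangular. No term order is needed.

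Second, your phrase ``the highest-weight and $(x_{-1})^{m+1}$ relations then force $|\varphi_1|+|\varphi_2|+n=m$'' conflates two separate reductions and misidentifies the mechanisms. For sums \emph{below} $m$ the paper pads $\varphi_1$ by copies of $1\in A$ using Proposition~\ref{degp}(6),(7) (Lemma~\ref{<m}); neither the highest-weight relation nor the nilpotency of $x_{-1}$ enters there. For sums \emph{above} $m$ --- which genuinely arise, since $h_1\otimes A$ contributes to $|\varphi_1|$ without changing the weight --- Proposition~\ref{degp} alone does not suffice. The paper instead invokes the auxiliary family $q_1$ together with the vanishing $X_1(\chi)X_{-1}(\varphi)X_{-3}(\xi)\w_{m\omega_1}=0$ (by weight considerations, since the result would have weight strictly above $m\omega_1$); see Lemmas~\ref{q1winC}, \ref{m+1}, \ref{basecase}, which also use that $[x_1,x_{-3}]=-x_{-2}$ kills $\w_{m\omega_1}$. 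This weight-vanishing-plus-$q_1$ step is precisely the ``main obstacle'' you flag, and it is the one place where your outline does not yet name the actual tool.
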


%\begin{rem}
%By setting $\fg_{1}=0$ we see that Theorem 3 in \cite{FL} is a corollary to Theorem \ref{thm}.
%\end{rem}

%\section{The Module Structures of $\ts^m(V\otimes A)$ and $\W_A(m\omega_1)$}

\section{Proof of Main Theorem}

\subsection{A Useful Basis of $\ts^m(V\otimes A)$}

In order to show that the map $\Phi$ in Theorem \ref{thm} is onto $\ts^m(V\otimes A)$ we need the following technical lemma, which gives a nice basis for $\ts^m(V\otimes A)$.

Let $(\varphi_1,\varphi_2,\xi)\in\f^2\times A^n$ be given. Then given
$\sigma\in S_{|\varphi_1|+|\varphi_2|+n}$
define $v^\sigma(\varphi_1,\varphi_2,\xi)\in T^{|\varphi_1|+|\varphi_2|+n}(V\otimes A)$ by
$$v^\sigma(\varphi_1,\varphi_2,\xi):=\sigma^{-1}\left(\bigotimes_{a\in\supp\varphi_1}
\left(v_1\otimes a\right)^{\otimes(\varphi_1(a))}\otimes
\bigotimes_{b\in\supp\varphi_2}\left(v_2\otimes b\right)^{\otimes(\varphi_2(b))}\otimes\bigotimes_{j=1}^n\left(v_3\otimes \xi(j)\right)\right)$$
Note that the definition of $v^\sigma(\varphi_1,\varphi_2,\xi)$ depends on orderings of $\supp\varphi_1$ and $\supp\varphi_2$.
Define
$$v(\varphi_1,\varphi_2,\xi):=\sum_{\sigma\in S_{|\varphi_1|+|\varphi_2|+n}}
v^\sigma(\varphi_1,\varphi_2,\xi)\in\ts^{|\varphi_1|+|\varphi_2|+n}(V\otimes A)$$
Note that the definition of $v(\varphi_1,\varphi_2,\xi)$ does \emph{not} depend on any orderings of $\supp\varphi_1$ and $\supp\varphi_2$.

\begin{lem}\label{tsbasis}
$$\mathcal{B}:=\left\{v(\varphi_1,\varphi_2,\xi)\ \Big|\ \varphi_1,\varphi_2\in\f(\bb),\ \xi\in\bb^n,\ |\varphi_1|+|\varphi_2|+n=m\right\}$$
is a basis for $\ts^m(V\otimes A)$.
\end{lem}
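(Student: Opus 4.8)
The plan is to prove that $\mathcal{B}$ is both a spanning set and linearly independent, working with the description of $\ts^m(V\otimes A)$ as the span of symmetrized pure tensors $\sum_{\sigma\in S_m}\sigma^{-1}(w_1\otimes\cdots\otimes w_m)$.

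\medskip

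\textbf{Spanning.} First I would reduce to the case where each $w_i$ is a homogeneous simple tensor $v_{\ell}\otimes a$ with $\ell\in\{1,2,3\}$ and $a\in\bb$, since these span $V\otimes A$ and the symmetrization map $w\mapsto\sum_{\sigma}\sigma^{-1}w$ is multilinear in the arguments $w_1,\ldots,w_m$. Given such a tuple $(w_1,\ldots,w_m)$, I would sort the tensor factors by type: the factors of the form $v_1\otimes a$ determine a multiset $\varphi_1\in\f(\bb)$, those of the form $v_2\otimes b$ a multiset $\varphi_2\in\f(\bb)$, and those of the form $v_3\otimes c$ a tuple $\xi\in\bb^n$ (recording repetitions is harmless since $v_3$ is odd, so repeated factors only produce sign changes, not vanishing — but in fact if two factors $v_3\otimes c$ coincide exactly the symmetrized sum vanishes, matching the exterior-algebra behavior of $\Lambda(x_{-3}\otimes A)$ appearing in Proposition \ref{degp}). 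The key point is that reordering the $w_i$ before symmetrizing changes $\sum_\sigma\sigma^{-1}(w_1\otimes\cdots\otimes w_m)$ only by an overall sign $\gamma$, because precomposing the sum over $\sigma$ with a fixed permutation is a bijection of $S_m$. Hence each symmetrized generator equals $\pm v(\varphi_1,\varphi_2,\xi)$ for the associated triple, and these span.

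\medskip

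\textbf{Linear independence.} This is the step I expect to be the main obstacle, and I would handle it by exhibiting, for each basis candidate, a coordinate functional that detects it. Fix ordered $\bbc$-bases so that $T^m(V\otimes A)$ has the standard basis of simple tensors $(v_{\ell_1}\otimes a_1)\otimes\cdots\otimes(v_{\ell_m}\otimes a_m)$ with $\ell_i\in\{1,2,3\}$, $a_i\in\bb$. For a triple $(\varphi_1,\varphi_2,\xi)$ pick one distinguished ordered representative tuple $t(\varphi_1,\varphi_2,\xi)$ (say: all $v_1$-factors first in a fixed order of $\supp\varphi_1$, then $v_2$-factors, then $v_3$-factors in the given order of $\xi$). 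I would show that the coefficient of this particular simple tensor in $v(\varphi_1',\varphi_2',\xi')$ is nonzero precisely when $(\varphi_1',\varphi_2',\xi')$ and $(\varphi_1,\varphi_2,\xi)$ determine the same unordered data, by analyzing which $\sigma\in S_m$ send the defining tensor of $v(\varphi_1',\varphi_2',\xi')$ to the chosen basis tensor — these $\sigma$ form a coset of the stabilizer subgroup permuting equal factors, and the signs $\gamma(w,\sigma)$ are constant on that coset (here one uses that equal odd factors would force the whole sum to vanish, which is why the $v_3$-part lives in $\bb^n$ modulo this vanishing, consistently with the exterior algebra), so the contributions add rather than cancel and yield $\pm(\text{size of the coset})\neq 0$. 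Therefore distinct triples have "disjoint leading supports" and a nontrivial linear combination cannot vanish.

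\medskip

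\textbf{Remark on bookkeeping.} The only genuinely delicate part is tracking the Koszul signs $\gamma(w,\sigma)$ carefully enough to be sure they do not cancel within a single $v(\varphi_1,\varphi_2,\xi)$; I would isolate this as a sub-claim: for the defining tuple $w$ of $v(\varphi_1,\varphi_2,\xi)$ and any two $\sigma,\tau\in S_m$ with $\sigma^{-1}w=\pm\tau^{-1}w$ as simple tensors, one has $\gamma(w,\sigma)=\gamma(w,\tau)$ whenever $\sigma^{-1}w$ and $\tau^{-1}w$ are literally equal (no sign), and $v(\varphi_1,\varphi_2,\xi)=0$ in the presence of a repeated odd factor — consistent with the convention $\xi\in\bb^n$ carrying multiplicities. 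Granting this sub-claim, both spanning and independence follow as above, and since $|\bb|$ ranges over all of $A$'s chosen basis, $\mathcal{B}$ is exactly a $\bbc$-basis of $\ts^m(V\otimes A)$. Counting triples $(\varphi_1,\varphi_2,\xi)$ with $|\varphi_1|+|\varphi_2|+n=m$ also cross-checks $\dim\ts^m(V\otimes A)$ against the super-symmetric-power dimension, providing a useful sanity check.
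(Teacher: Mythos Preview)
Your approach is correct and essentially the same as the paper's: both reduce spanning to symmetrizations of simple tensors $v_\ell\otimes b$ with $b\in\bb$, and both deduce linear independence from the fact that these simple tensors form a basis of $T^m(V\otimes A)$. The paper compresses this into two sentences, whereas you spell out the leading-coefficient argument and track the Koszul signs explicitly.
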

\begin{proof}
It is clear that $\mathcal{B}$ spans $\ts^m(V\otimes A)$. $\mathcal{B}$ is linearly independent because the set
$$\{(v_{j_1}\otimes b_1)\otimes\dots\otimes(v_{j_m}\otimes b_m)\ |\ j_1,\ldots,j_m\in\{1,2,3\},\ b_1,\ldots,b_m\in\bb\}$$
is a basis for $T^m(V\otimes A)$ and hence is linearly independent.
\end{proof}

\subsection{}

In this subsection we will study the action of $p(\varphi_1,\varphi_2,\xi)$ on $v$and in the process we will demonstrate that $\Phi$ is onto $\ts^m(V\otimes A)$. In order to understand this module action we need to study the action of $\Delta^{k-1}$ on $p_1(\varphi,\psi)$.

Given $\chi\in\f$ and $k\in\N$ define
$$\comp_k(\chi)=\left\{\psi:\{1,\ldots,k\}\to\f(\chi)\ \Bigg|\ \sum_{j=1}^k\psi(j)=\chi\right\}$$

\begin{lem}\label{Deltap}
Let $\varphi,\chi\in\f$ and $k\in\N$ be given. Then
\begin{eqnarray*}
\Delta^{k-1}(p_1(\varphi,\chi))&=&\sum_{\substack{\theta\in\comp_k(\varphi)\\ \psi\in\comp_k(\chi)}}p_1(\theta(1),\psi(1))\otimes\dots\otimes p_1(\theta(k),\psi(k))
\end{eqnarray*}
\end{lem}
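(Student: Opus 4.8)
The natural approach is induction on $|\varphi|+|\chi|$, using the recursive definition of $p_1$ together with the coproduct identity $\Delta^1(uw)=\sum(-1)^{|u_2||w_1|}u_1w_1\otimes u_2w_2$ and the multiplicativity of $\bar\Delta^{k-1}$ (i.e. $\Delta^{k-1}$) as a superalgebra homomorphism. Since $\Delta^{k-1}$ is determined by its effect on generators $z\in\fg\otimes A$, one first records the base cases: on $h_1\otimes b$ we get $\sum_{j=1}^k 1\otimes\cdots\otimes(h_1\otimes b)\otimes\cdots\otimes 1$ (the element in slot $j$), and likewise on $x_{-1}\otimes b$. These are exactly $\sum_{\psi\in\comp_k(\chi_b)}p_1(\theta(1),\psi(1))\otimes\cdots$ in the relevant degenerate cases, so the formula holds at the bottom of the induction.

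\textbf{Inductive step.} I would split according to the three branches of the recursion for $p_1(\varphi,\chi)$. When $\varphi=0$, $\chi\neq 0$, one has $p_1(0,\chi)=-\frac{1}{|\chi|}\sum_{\psi'\in\f(\chi)-\{0\}}\m(\psi')(h_1\otimes\pi(\psi'))\,p_1(0,\chi-\psi')$; apply $\Delta^{k-1}$, use that it is an algebra map, insert the known coproduct of $h_1\otimes\pi(\psi')$ and the inductive hypothesis for $p_1(0,\chi-\psi')$, and then reorganize the resulting sum over $(\text{slot }j,\ \psi',\ \text{composition of }\chi-\psi')$ into a sum over compositions $\psi\in\comp_k(\chi)$: the slot $j$ receiving the extra $h_1\otimes\pi(\psi')$ tells you which part of the composition of $\chi$ absorbs $\psi'$, and the combinatorial prefactors $\m$ and $\frac{1}{|\chi|}$ must be checked to reassemble correctly via the recursion satisfied by each $p_1(\theta(j),\psi(j))$ individually. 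The case $\varphi\neq 0$ is analogous, using the third branch $p_1(\varphi,\chi)=-\frac{1}{|\varphi|}\sum_{d,\psi'}\m(\psi')(x_{-1}\otimes d\pi(\psi'))\,p_1(\varphi-\chi_d,\chi-\psi')$, and now tracking which part $\theta(j)$ of a composition of $\varphi$ absorbs $\chi_d$. Here one uses that $x_{-1}\otimes A$ is even (it lies in $(\fn^-)_0$), so no sign factors from the braiding intervene — this is why the formula has no signs, and it should be remarked on.

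\textbf{Alternative, cleaner route.} Rather than pushing the recursion through $\Delta^{k-1}$ directly, it may be smoother to reduce $k$ by one: prove the $k=2$ case $\Delta^1(p_1(\varphi,\chi))=\sum_{\theta\in\comp_2(\varphi),\,\psi\in\comp_2(\chi)}p_1(\theta(1),\psi(1))\otimes p_1(\theta(2),\psi(2))$ by induction on $|\varphi|+|\chi|$ as above, and then obtain general $k$ by coassociativity, $\Delta^{k-1}=(\Delta^1\otimes\mathrm{id}^{\otimes(k-2)})\circ\Delta^{k-2}$, together with the fact that compositions into $k$ parts are obtained by refining a composition into $k-1$ parts by splitting the first part into two — an easy bijection $\comp_{k-1}\times\comp_2\leftrightarrow\comp_k$ that is compatible with the summation. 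This isolates the only genuinely computational piece into the $k=2$ step.

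\textbf{Main obstacle.} The crux is the bookkeeping in the $k=2$ (or general-$k$) inductive step: verifying that when one expands $\Delta^1$ of the right-hand side of the $p_1$-recursion and applies the inductive hypothesis, the double sum over (which tensor slot absorbs the newly extracted factor) $\times$ (compositions of the smaller multiset) collapses, \emph{with the correct rational coefficients} $\frac{1}{|\chi|}$, $\frac{1}{|\varphi|}$, $\m(\psi')$, onto $\sum_{\theta,\psi}p_1(\theta(1),\psi(1))\otimes p_1(\theta(2),\psi(2))$. Concretely one must observe that $|\chi|=|\psi(1)|+|\psi(2)|$ and that $\sum_{\psi'\le\chi}$ decomposes as ``$\psi'\le\psi(1)$ contributing to slot $1$'' plus ``$\psi'\le\psi(2)$ contributing to slot $2$'', and that on each slot the recursion for $p_1(\theta(i),\psi(i))$ is exactly reproduced after clearing the $\frac{1}{|\chi|}$ against the $\frac{1}{|\psi(i)|}$ via the standard identity $\sum_i \frac{|\psi(i)|}{|\chi|}=1$. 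Once this normalization identity is in hand the rest is formal. No sign subtleties arise because all relevant elements ($h_1\otimes A$, $x_{-1}\otimes A$, and hence $p_1(\varphi,\chi)$) lie in the even part of $\bu(\fg\otimes A)$, so the braiding signs in $\Delta^1(uw)$ are trivial throughout.
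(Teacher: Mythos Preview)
Your proposal is correct, and your ``alternative, cleaner route'' is precisely the approach the paper takes: first establish the $k=2$ case by induction (the paper does $\varphi=0$ inducting on $|\chi|$, then inducts on $|\varphi|+|\chi|$ --- exactly your split by branches of the $p_1$-recursion), and then deduce general $k$ from coassociativity $\Delta^{l}=(1^{\otimes(l-1)}\otimes\Delta^1)\circ\Delta^{l-1}$. Your observation that all elements involved are even, so no braiding signs enter, is the right reason the formula is sign-free.
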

\begin{proof}
A complete proof of the formula for $p_1(\varphi,\chi)$ is given in Proposition 36 in \cite{Cham}.  The case $k=1$ is trivial. The idea is to prove the case $k=2$ first by setting $\varphi=0$ and inducting on $|\chi|$ and then to induct on $|\varphi|+|\chi|$. The case $k>2$ is then proved by induction on $k$ using the identity $\Delta^l=(1^{\otimes(l-1)}\otimes\Delta^1)\circ\Delta^{l-1}$ and the $k=2$ case.
\end{proof}

\begin{lem}\label{p1v1}
For all $a\in A$ and $\varphi,\psi\in\f$ with $|\varphi|+|\psi|>1$
\begin{eqnarray*}
p_1(\varphi,\psi)(v_1\otimes a)&=&0
\end{eqnarray*}
\end{lem}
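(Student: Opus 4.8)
The plan is to compute $p_1(\varphi,\psi)(v_1\otimes a)$ by applying the coproduct formula from Lemma~\ref{Deltap} together with the explicit module action $\rho\circ\Delta^{k-1}$, and to show that every term vanishes for degree/weight reasons. First I would reduce to the case where $\psi$ is built up one element at a time: since $p_1(\varphi,\psi)\in\bu_{|\varphi|}(x_{-1}\otimes A)\bu_{|\psi|}(h_1\otimes A)$ by the Remark following Proposition~\ref{degp}, we know $p_1(\varphi,\psi)$ is a $\C$-linear combination of monomials in the $x_{-1}\otimes b$'s and $h_1\otimes c$'s with at least $|\varphi|+|\psi|>1$ factors in total. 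The key observation is that on $V$ the only nonzero actions of these elements are $h_1 v_1 = v_1$, $x_{-1}v_1 = v_2$, and then $h_1 v_2 = -v_2$ while $x_{-1}v_2 = 0$; crucially $x_{-1}$ and $h_1$ both kill $v_3$, and $x_{-1}v_2=0$ means $x_{-1}$ applied twice to anything in $\C v_1 \oplus \C v_2$ vanishes.

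The core of the argument is therefore a statement about how a length-$(\geq 2)$ word in $\{x_{-1}\otimes A,\ h_1\otimes A\}$ acts on the single vector $v_1\otimes a \in V\otimes A$. Here I would use the structure of $p_1$: by Proposition~\ref{degp}(3), modulo strictly lower $h_1$-degree, $p_1(\varphi,\psi)$ equals $(-1)^{|\psi|+|\varphi|}X_{-1}(\varphi)H_1(\psi)$, and one can induct on $|\varphi|+|\psi|$ using the recursive definition of $p_1$. When $\varphi = 0$ and $|\psi|>1$, the leading term $H_1(\psi)$ involves a product of at least two factors $h_1\otimes b$; acting on $v_1\otimes a$ the first factor produces a multiple of $v_1\otimes ba$, and the second factor again produces a multiple of $v_1\otimes(\cdots)$ — so this does \emph{not} obviously vanish, which tells me I must instead track the recursion more carefully rather than just the leading term. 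The cleaner route: prove by induction on $|\varphi|+|\psi|$ directly from the recursive formulas for $p_1$. In the step $\varphi = 0$, $\psi\neq 0$, the recursion writes $p_1(0,\psi)$ as a sum of terms $(h_1\otimes\pi(\sigma))p_1(0,\psi-\sigma)$; acting on $v_1\otimes a$ gives $(h_1\otimes\pi(\sigma))\bigl(p_1(0,\psi-\sigma)(v_1\otimes a)\bigr)$, and if $|\psi-\sigma|>1$ the inner factor is zero by induction, while if $|\psi-\sigma|\leq 1$ we are in the base of the induction and can compute the finitely many cases by hand (e.g. $p_1(0,\chi_b)(v_1\otimes a) = -(h_1\otimes b)(v_1\otimes a) = -v_1\otimes ba$, so $p_1(0,\psi)(v_1\otimes a)$ with $|\psi|=2$ becomes $-\tfrac12\sum(h_1\otimes \cdots)(v_1\otimes\cdots)$, a multiple of $v_1\otimes(\text{product})$, which is \emph{not} zero — so the statement as literally the action of the abstract element $p_1(\varphi,\psi)$ on $v_1\otimes a$ inside $V\otimes A$ cannot be right, and I must re-read).

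Re-reading the setup, $V\otimes A$ is being used with the module action $\rho\circ\Delta^{m-1}$ on a tensor power, but in Lemma~\ref{p1v1} the expression $p_1(\varphi,\psi)(v_1\otimes a)$ denotes the action on the single tensor factor $V\otimes A$ via $\rho\circ\Delta^0 = \rho$, i.e. the ordinary $\bu(\fg\otimes A)$-module action on $V\otimes A$ — and here the point is precisely that $x_{-1}v_2 = 0$ and $h_1$ preserves the $\C v_1\oplus\C v_2$ decomposition, so after the \emph{first} application of any factor of $p_1(\varphi,\psi)$ to $v_1\otimes a$ we land in $\C v_1\otimes A$ or $\C v_2\otimes A$, and repeated application of the remaining $(\geq 1)$ factors, each of which is $x_{-1}\otimes b$ or $h_1\otimes b$, is forced to vanish as soon as two $x_{-1}$'s have been applied or... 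In fact the honest mechanism is: $p_1(\varphi,\psi)$ lies in $\bu(x_{-1}\otimes A)\bu(h_1\otimes A)$, so reading right-to-left it first applies $H_1(\psi)$-type factors (scaling $v_1\otimes a$ by scalars and changing the $A$-component) and then $X_{-1}(\varphi)$-type factors; but $X_{-1}(\varphi)$ with $|\varphi|\geq 2$ means two or more copies of $x_{-1}\otimes(\cdot)$ hitting a vector in $\C v_1\otimes A$, and since $x_{-1}^2 v_1 = x_{-1}v_2 = 0$ we get $0$; while $|\varphi|\leq 1, |\psi|\geq 1$ forces $|\psi|\geq 1$ with total $>1$, and then... the genuinely subtle case is $\varphi=0$, $|\psi|\geq 2$, where I expect one must use a cancellation intrinsic to the \emph{definition} of $p_1$ — namely that $p_1(0,\psi)$ was constructed (via Proposition~\ref{degp}(1)) so that $X_1(\psi)X_{-1}(|\psi|\chi_1) \equiv (-1)^{|\psi|}p_1(0,\psi)$ modulo $\bu(\fg\otimes A)(x_1\otimes A)$, and on $V\otimes A$ one has $x_1(v_1\otimes a)=0$, so $p_1(0,\psi)(v_1\otimes a) = (-1)^{|\psi|}X_1(\psi)X_{-1}(|\psi|\chi_1)(v_1\otimes a)$; but $X_{-1}(|\psi|\chi_1)$ contains $(x_{-1}\otimes b)^{(2)}$ (or more) which kills $v_1\otimes a$ since $x_{-1}^2$ does. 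That is the clean argument, and I would organize the whole proof around Proposition~\ref{degp}(1)--(2): express $p_1(\varphi,\psi)$ via $X_1(\cdot)X_{-1}(\cdot)$ modulo the left ideal $\bu(\fg\otimes A)(x_1\otimes A)$ (plus, in case (2), a lower-degree correction term), note $x_1$ and the correction term act as $0$ on $v_1\otimes a$, and observe that the surviving $X_{-1}$ factor always contains a square $(x_{-1}\otimes b)^{(2)}$ whenever $|\varphi|+|\psi|>1$, hence annihilates $v_1\otimes a$. The main obstacle will be handling the mixed case $0<|\varphi|<|\psi|$ cleanly, where one needs part (2) and must argue the correction term in $\bu_{|\phi|-|\psi|}(x_{-1}\otimes A)\bu_{|\psi|-1}(h_1\otimes A)$ also dies on $v_1\otimes a$ — this should follow by a secondary induction on $|\varphi|+|\psi|$ since that correction term, too, has total degree $>1$ and can be massaged into the same form.
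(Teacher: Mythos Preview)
Your case split is right, and two of the three cases land. The $|\varphi|\geq 2$ argument via $x_{-1}^2 v_1=0$ matches the paper exactly, and your use of Proposition~\ref{degp}(1) for $\varphi=0$, $|\psi|\geq 2$ is actually slicker than the paper's bare ``induction on $|\psi|$'': since $x_1(v_1\otimes a)=0$ and $X_{-1}(|\psi|\chi_1)=(x_{-1})^{(|\psi|)}$ kills $v_1\otimes a$ once $|\psi|\geq 2$, part (1) gives the vanishing in one line.

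The gap is the mixed case $|\varphi|=1$. Proposition~\ref{degp}(1) only produces $p_1$ whose first argument is of the special form $(r-|\chi|)\chi_1$, so it says nothing about $p_1(\chi_d,\psi)$ for a general $d\in A$. And Proposition~\ref{degp}(2) is a statement about $q_1$, not $p_1$; the ``correction term'' you invoke lives in that $q_1$ formula and has no bearing on $p_1(\chi_d,\psi)$, so your proposed route through (1)--(2) simply does not reach this case. Incidentally, the direct recursion you abandoned midway \emph{does} work: for $\varphi=0$, $|\psi|=2$ (say $\psi=\chi_b+\chi_c$, $b\neq c$) one computes $p_1(0,\psi)=(h_1\otimes b)(h_1\otimes c)-(h_1\otimes bc)$, and both terms send $v_1\otimes a$ to $v_1\otimes abc$, so they cancel---your worry that this ``is not zero'' came from stopping the calculation one line too soon. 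The paper's proof is exactly this recursion-and-cancellation route: $|\varphi|\geq 2$ by the $x_{-1}^2$ argument; $\varphi=0$, $|\psi|\geq 2$ by induction on $|\psi|$; and $|\varphi|=|\psi|=1$ by the two-term hand check $p_1(\chi_d,\chi_c)(v_1\otimes a)=(x_{-1}\otimes d)(h_1\otimes c)(v_1\otimes a)-(x_{-1}\otimes dc)(v_1\otimes a)=0$. The residual case $|\varphi|=1$, $|\psi|\geq 2$ then falls out of one further step of the $p_1$ recursion, using the already-established $\varphi=0$ result.
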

\begin{proof}
Since each term of $p_1(\varphi,\psi)$ has $x_{-1}$ appearing $|\varphi|$ times, $p_1(\varphi,\psi)(v_1\otimes a)=0$ if $|\varphi|\geq2$. If $|\psi|\geq2$ it can be shown by induction on $|\psi|$ that $p_1(0,\psi)(v_1\otimes a)=0$. We can verify by direct calculation that $p_1(\varphi,\psi)(v_1\otimes a)=0$ if $|\xi|=|\psi|=1$.
\end{proof}

\begin{lem}\label{p1v}
Let $\varphi,\chi\in\f$ be given and $k:=|\varphi|+|\chi|$ then $$p_1(\varphi,\chi)(v_1)^{k}=(-1)^{k}v(\chi,\varphi,\varnothing)$$
\end{lem}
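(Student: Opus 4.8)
The plan is to prove the identity by induction on $k = |\varphi| + |\chi|$, using the recursive definition of $p_1$ together with the coproduct formula in Lemma \ref{Deltap} and the vanishing in Lemma \ref{p1v1}. First I would record what $v(\chi, \varphi, \varnothing)$ looks like: it is the full symmetrization $\sum_{\sigma \in S_k} \sigma^{-1}$ of a tensor built from $|\chi|$ factors of the form $(v_1 \otimes c)$ (one for each unit in $\chi$) and $|\varphi|$ factors of the form $(v_2 \otimes d)$ (one for each unit in $\varphi$); note the roles of $\varphi$ and $\chi$ are swapped between the two sides because in $p_1(\varphi,\chi)$ the letter $\varphi$ counts occurrences of $x_{-1}$ while $\chi$ counts occurrences of $h_1$, and $x_{-1} v_1 = v_2$ while $h_1 v_1 = v_1$.

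The base cases $k = 0$ (where $p_1(0,0) = 1$ and $v(0,0,\varnothing) = v_1$, so both sides are $v_1$) and $k = 1$ are handled by direct computation: for $k=1$ one checks $p_1(0, \chi_c)(v_1) = -(h_1 \otimes c) v_1 = -(v_1 \otimes c) = -v(\chi_c, 0, \varnothing)$ and $p_1(\chi_d, 0)(v_1) = -(x_{-1} \otimes d) v_1 = -(v_2 \otimes d) = -v(0, \chi_d, \varnothing)$, matching $(-1)^1$. For the inductive step I would peel off one tensor factor: apply $\Delta^{k-1}$ to $p_1(\varphi,\chi)$ to write the action on $(v_1)^{\otimes k}$ as a sum over compositions $\theta \in \comp_k(\varphi)$, $\psi \in \comp_k(\chi)$ of $\bigl(p_1(\theta(1),\psi(1)) v_1\bigr) \otimes \cdots \otimes \bigl(p_1(\theta(k),\psi(k)) v_1\bigr)$, with the appropriate Koszul signs coming from the definition of $\rho$. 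By Lemma \ref{p1v1}, any factor with $|\theta(i)| + |\psi(i)| > 1$ kills its $v_1$; by the $k=0,1$ base computation, the surviving compositions are exactly those in which each $\psi(i) + \theta(i)$ has total size $1$, i.e. each slot carries either a single $h_1 \otimes c$ (producing $v_1 \otimes c$ with a sign $-1$) or a single $x_{-1} \otimes d$ (producing $v_2 \otimes d$ with a sign $-1$). Summing these surviving terms over all such compositions reconstructs precisely $(-1)^k \sum_{\sigma} \sigma^{-1}(\cdots) = (-1)^k v(\chi, \varphi, \varnothing)$, since ranging over ordered ways to distribute the $|\chi|$ copies of $v_1 \otimes (\cdot)$ and $|\varphi|$ copies of $v_2 \otimes (\cdot)$ among the $k$ tensor slots is the same as applying the full symmetrizer (up to the bookkeeping of multiplicities, which is exactly accounted for by the divided-power normalizations $(\cdot)^{\otimes(\varphi_i(a))}$ appearing in the definition of $v^\sigma$).

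The main obstacle I expect is the careful sign bookkeeping. Two sign sources must be reconciled: the Koszul signs $(-1)^{\sum_{j}\sum_{k<j}|z_j||u_k|}$ in the definition of $\rho \circ \Delta^{m-1}$, which arise from commuting the odd parts of the universal enveloping algebra past the odd vectors $v_3$ — but here there are no $v_3$'s and the relevant factors $x_{-1}, h_1$ are even, so these signs should all be trivial — and the sign $\gamma(w,\sigma)$ built into the symmetrizer defining $v(\chi,\varphi,\varnothing)$, which is likewise trivial because $v_1, v_2 \in V_0$ are even. So in fact all signs except the overall $(-1)^k$ should collapse; verifying this cleanly is the one place where care is needed. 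A secondary point requiring attention is the combinatorial identification of "sum over compositions with all slots of size one, weighted by divided powers" with "full symmetrization", which is a standard multinomial identity but should be stated explicitly. I would assume Proposition \ref{degp}, Lemma \ref{Deltap}, and Lemma \ref{p1v1} throughout and not re-derive them.
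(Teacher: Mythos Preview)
Your proposal is correct and matches the paper's own argument almost exactly: apply Lemma~\ref{Deltap} to expand $\Delta^{k-1}(p_1(\varphi,\chi))$ as a sum over compositions, act on $(v_1)^{\otimes k}$ slot by slot, use Lemma~\ref{p1v1} to kill every slot with $|\theta(j)|+|\psi(j)|>1$, and identify the surviving terms with the symmetrization defining $v(\chi,\varphi,\varnothing)$. The only cosmetic difference is that you phrase this as an induction on $k$, whereas in fact the argument you describe (and the paper's) works directly for every $k$ at once---the ``inductive step'' you wrote down is already the full proof, so the induction scaffolding can simply be dropped.
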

\begin{proof}
\begin{eqnarray*}
p_1(\varphi,\chi)(v_1)^{k}&=&\rho\left(\sum_{\substack{\theta\in\comp_k(\varphi)\\ \psi\in\comp_k(\chi)}}p_1(\theta(1),\psi(1))\otimes\dots\otimes p_1(\theta(k),\psi(k))\right)(v_1)^{k}\hskip.1in\textnormal{by Lemma }\ref{Deltap}\\
&=&\sum_{\substack{\theta\in\comp_k(\varphi)\\ \psi\in\comp_k(\chi)}}p_1(\theta(1),\psi(1))(v_1)\otimes\dots\otimes p_1(\theta(k),\psi(k))(v_1)
\end{eqnarray*}
Lemma \ref{p1v1} implies that the only non-zero terms in the previous sum will have $|\theta(j)|+|\psi(j)|\leq1$ for all $j$. $p_1(\chi_a,0)(v_1)=-(v_2\otimes a)$ and $p_1(0,\chi_b)(v_1)=-(v_1\otimes b)$. $-(v_2\otimes a)$ and $-(v_1\otimes b)$ where $a\in\supp\varphi$ and $b\in\supp\chi$ are the exact factors in definition of $v(\varphi,\chi,\varnothing)$ and they occur every possible position.
\end{proof}

\begin{prop}\label{pv}
For all $(\varphi_1,\varphi_2,\xi)\in\f^2\times A^n$ with $|\varphi_1|+|\varphi_2|+n=m$
$$p(\varphi_1,\varphi_2,\xi)v=(-1)^{|\varphi_1|+|\varphi_2|}v(\varphi_1,\varphi_2,\xi)$$
\end{prop}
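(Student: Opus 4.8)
The statement to prove is Proposition \ref{pv}: for every $(\varphi_1,\varphi_2,\xi)\in\f^2\times A^n$ with $|\varphi_1|+|\varphi_2|+n=m$ we have $p(\varphi_1,\varphi_2,\xi)v=(-1)^{|\varphi_1|+|\varphi_2|}v(\varphi_1,\varphi_2,\xi)$, where $v=(v_1)^{\otimes m}$. Since by definition $p(\varphi_1,\varphi_2,\xi)=p_1(\varphi_2,\varphi_1)X_{-3}(\xi)$, the natural strategy is to peel off the $X_{-3}(\xi)$ factor first, apply it to $v$, and then invoke Lemma \ref{p1v} to handle the remaining $p_1$-action. The main point is bookkeeping: the coproduct $\Delta^{m-1}$ must be applied to the whole product $p_1(\varphi_2,\varphi_1)X_{-3}(\xi)$, and one must track the signs coming from the braided structure and from the $S_m$-action.

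\textbf{Step 1.} First I would compute $X_{-3}(\xi)v$. Here $X_{-3}(\xi)=\prod_{k=1}^n(x_{-3}\otimes\xi(k))$, and each $x_{-3}\otimes\xi(k)$ is an odd element. Using $\Delta^{m-1}$ on this product and the fact that $x_{-3}v_1=v_3$ while $x_{-3}v_2=0$ and $x_{-3}v_3=0$ (the latter because $x_{-3}$ is odd of square zero on $V$, or directly from the module structure), the only surviving terms place exactly one factor $x_{-3}\otimes\xi(k)$ on $n$ distinct tensor slots, turning those $v_1$'s into $v_3$'s with the appropriate coefficient $\xi(k)$. This yields $X_{-3}(\xi)v = \sum_{\sigma} v^{\sigma}(0,0,\xi)\otimes(v_1)^{\otimes(m-n)}$-type sum; more precisely it equals (up to the sign $\gamma$ from the $S_m$-action bookkeeping) the fully symmetrized tensor with $n$ slots carrying $v_3\otimes\xi(j)$ and $m-n$ slots carrying $v_1$. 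I would phrase this as: $X_{-3}(\xi)v = c\cdot w$ where $w$ is the symmetrization of $\bigotimes_{j=1}^n(v_3\otimes\xi(j))\otimes(v_1)^{\otimes(m-n)}$ inside $\ts^m(V\otimes A)$, and $c$ is a sign/combinatorial constant I would pin down to be $1$ by a careful sign count (the odd-odd braiding signs in $\Delta$ cancel against the $\gamma$'s).

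\textbf{Step 2.} Next I would apply $p_1(\varphi_2,\varphi_1)$ to $w$. Since $p_1(\varphi_2,\varphi_1)\in\bu(x_{-1}\otimes A)\bu(h_1\otimes A)$ by the Remark after Proposition \ref{degp}, and both $x_{-1}$ and $h_1$ annihilate $v_3$ (indeed $x_{-1}v_3=x_{-1}x_{-3}v_1$, and from the bracket table $[x_{-1},x_{-3}]=0$ and $x_{-1}v_1=v_2$, so $x_{-1}v_3=x_{-3}v_2$; one checks $x_{-3}v_2=x_{-3}x_{-1}v_1=x_{-1}x_{-3}v_1$, giving a consistency relation — in fact $v_3$ generates a trivial $\sl_2$-submodule for the triple $\{x_{-1},h_1,x_1\}$, so $h_1v_3=0$ and $x_{-1}v_3$ lies outside the span and must be checked to be zero on $V$). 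Granting that $(x_{-1}\otimes A)$ and $(h_1\otimes A)$ kill each $v_3\otimes\xi(j)$ slot, the coproduct expansion of $p_1(\varphi_2,\varphi_1)$ forces all its factors onto the $(v_1)^{\otimes(m-n)}$ block, and Lemma \ref{p1v} applies verbatim to that block (with $k=|\varphi_1|+|\varphi_2|=m-n$), giving $(-1)^{m-n}v(\varphi_2,\varphi_1,\varnothing)$ on those slots. Reassembling, $p_1(\varphi_2,\varphi_1)w$ is the symmetrization with the $v_1,v_2$-pattern of $v(\varphi_2,\varphi_1,\varnothing)$ on $m-n$ slots and the $v_3$-pattern on $n$ slots, i.e. exactly $v(\varphi_1,\varphi_2,\xi)$, again up to a sign I would confirm.

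\textbf{Step 3 and the main obstacle.} The final step is to chase the constants through: combine the constant from Step 1, the $(-1)^{m-n}$ from Lemma \ref{p1v}, and verify the total equals $(-1)^{|\varphi_1|+|\varphi_2|}=(-1)^{m-n}$, so the Step-1 constant must be $+1$. The hard part is precisely this sign/coefficient bookkeeping: one must correctly track (a) the odd-odd skew-commuting signs in the braided coproduct $\Delta^{m-1}$ applied to a product containing the odd elements $x_{-3}\otimes\xi(k)$; (b) the signs $\gamma(w,\sigma)$ built into the $S_m$-action on $T^m(V\otimes A)$; (c) the interaction between $p_1(\varphi_2,\varphi_1)$ (which is even, so contributes no extra sign when commuted past the odd factors) and $X_{-3}(\xi)$. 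I would organize this by first establishing the clean statement $X_{-3}(\xi)(v_1)^{\otimes m}=\sigma\text{-symmetrization of }\bigotimes(v_3\otimes\xi(j))\otimes(v_1)^{\otimes(m-n)}$ with coefficient $+1$ as a lemma proved by induction on $n$ (using $\Delta^1(uw)=\sum(-1)^{|u_2||w_1|}u_1w_1\otimes u_2w_2$), and then the rest follows by combining with Lemma \ref{p1v} and noting that symmetrizing a partially symmetrized tensor reproduces the full symmetrization (absorbing the multinomial factor) — this last combinatorial identity, that the product of a symmetrization over the $v_1$-slots and a symmetrization over the $v_3$-slots, re-symmetrized, equals $v(\varphi_1,\varphi_2,\xi)$, is the other place where care is needed.
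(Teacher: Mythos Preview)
Your approach is essentially the same as the paper's: first compute $X_{-3}(\xi)v$, then apply $p_1(\varphi_2,\varphi_1)$ via Lemma \ref{p1v} using that $x_{-1}$ and $h_1$ kill $v_3$. The paper compresses your Step~1 into the single assertion $X_{-3}(\xi)v=v((m-n)\chi_1,0,\xi)$ (so your constant $c$ is $+1$, and the sign bookkeeping you worry about is absorbed into the very definition of $v(\varphi_1,\varphi_2,\xi)$), and handles your Step~2 hesitation about $x_{-1}v_3$ by simply reading it off the natural module: $x_{-1}=e_{2,1}$ and $v_3=e_3$, so $x_{-1}v_3=0$ and $h_1v_3=0$ immediately.
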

\begin{proof}
Let $k=|\varphi_1|+|\varphi_2|$. Then
\begin{eqnarray*}
p(\varphi_1,\varphi_2,\xi)=p_1(\varphi_2,\varphi_1)X_{-3}(\xi)v=p_1(\varphi_2,\varphi_1)v((m-n)\chi_1,0,\xi)
\end{eqnarray*}
Because $x_{-1}v_3=0$ and $h_1v_3=0$, Lemma \ref{p1v} implies that $p_1(\varphi_2,\varphi_1)v((m-n)\chi_1,0,\xi)=(-1)^kv(\varphi_1,\varphi_2,\xi)$.
\end{proof}

Define
$$C:=\span\ \left\{p(\varphi_1,\varphi_2,\xi)\w_{m\omega_1}\ \big|\ \varphi_1,\varphi_2\in\f(\bb),\ \xi\in\bb^n,\ |\varphi_1|+|\varphi_2|+n=m\right\}$$
and note that $\Phi(C)=\B$.

\begin{lem}\label{<m}
For all $\varphi_1,\varphi_2\in\f(\bb)$, and $\xi\in\bb^n$, with $|\varphi_1|+|\varphi_2|+n<m$ $p(\varphi_1,\varphi_2,\xi)\w_{m\omega_1}\in C$.
\end{lem}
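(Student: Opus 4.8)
The plan is to prove this directly and without induction, by writing $p(\varphi_1,\varphi_2,\xi)\w_{m\omega_1}$ as a scalar multiple of a single spanning vector of $C$ of total degree $m$. Put $d := m - |\varphi_1| - |\varphi_2| - n$, so that $d \geq 1$ by hypothesis, and recall $p(\varphi_1,\varphi_2,\xi) = p_1(\varphi_2,\varphi_1)X_{-3}(\xi)$. The first step is to apply Proposition \ref{degp}(6) with $\phi := \varphi_2$, $\chi := \varphi_1 + d\chi_1$ and $k := d$; this is allowed since $(\varphi_1 + d\chi_1)(1) = \varphi_1(1) + d \geq d$ and $|\varphi_1 + d\chi_1| = |\varphi_1| + d > 0$, and because $-|\phi| - |\chi| + k = -|\varphi_1| - |\varphi_2|$ it gives, after rearranging, the operator identity
\[
p_1(\varphi_2,\varphi_1)\binom{h_1 - |\varphi_1| - |\varphi_2|}{d} = (-1)^d\binom{\varphi_1(1) + d}{d}\,p_1(\varphi_2,\varphi_1 + d\chi_1).
\]

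The second step is to multiply this identity on the right by $X_{-3}(\xi)$ and apply it to $\w_{m\omega_1}$. On the left-hand side I would commute the $h_1$-binomial past $X_{-3}(\xi)$ by Proposition \ref{degp}(7), turning $\binom{h_1 - |\varphi_1| - |\varphi_2|}{d}X_{-3}(\xi)$ into $X_{-3}(\xi)\binom{h_1 - |\varphi_1| - |\varphi_2| - n}{d}$; since the latter binomial is a polynomial in $h_1$ and $h_1\w_{m\omega_1} = m\w_{m\omega_1}$, it acts on $\w_{m\omega_1}$ as the scalar $\binom{m - |\varphi_1| - |\varphi_2| - n}{d} = \binom{d}{d} = 1$. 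Thus the left-hand side collapses to $p(\varphi_1,\varphi_2,\xi)\w_{m\omega_1}$ and the right-hand side is $(-1)^d\binom{\varphi_1(1)+d}{d}\,p(\varphi_1 + d\chi_1,\varphi_2,\xi)\w_{m\omega_1}$, so that
\[
p(\varphi_1,\varphi_2,\xi)\w_{m\omega_1} = (-1)^d\binom{\varphi_1(1) + d}{d}\,p(\varphi_1 + d\chi_1,\varphi_2,\xi)\w_{m\omega_1}.
\]

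To finish, I observe that $\varphi_1 + d\chi_1 \in \f(\bb)$ (using $1 \in \bb$), that $\varphi_2 \in \f(\bb)$, $\xi \in \bb^n$, and that $|\varphi_1 + d\chi_1| + |\varphi_2| + n = (|\varphi_1| + d) + |\varphi_2| + n = m$; hence $p(\varphi_1 + d\chi_1,\varphi_2,\xi)\w_{m\omega_1}$ is one of the vectors spanning $C$, and the displayed identity exhibits $p(\varphi_1,\varphi_2,\xi)\w_{m\omega_1}$ as a scalar multiple of it, whence it lies in $C$. I expect the only genuinely delicate points to be identifying the correct specialization of Proposition \ref{degp}(6) so that the $h_1$-binomial comes out exactly right, and keeping careful track of the indices when transporting that binomial through $X_{-3}(\xi)$ and evaluating on the highest weight vector; one should also note that the convention $1 \in \bb$ (which one may always arrange) is what makes $p(\varphi_1 + d\chi_1,\varphi_2,\xi)\w_{m\omega_1}$ an honest generator of $C$.
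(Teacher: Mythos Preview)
Your proof is correct and follows essentially the same route as the paper: both apply Proposition~\ref{degp}(6) with $\chi=\varphi_1+d\chi_1$ and $k=d$ (where $d=m-|\varphi_1|-|\varphi_2|-n$), then use Proposition~\ref{degp}(7) to push the $h_1$-binomial past $X_{-3}(\xi)$, evaluate it as the scalar $1$ on $\w_{m\omega_1}$, and conclude that $p(\varphi_1,\varphi_2,\xi)\w_{m\omega_1}$ is a scalar multiple of the generator $p(\varphi_1+d\chi_1,\varphi_2,\xi)\w_{m\omega_1}\in C$. Your explicit remark that $1\in\bb$ is needed for $\varphi_1+d\chi_1\in\f(\bb)$ is a point the paper leaves implicit.
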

\begin{proof}
Define $k:=|\varphi_1|+|\varphi_2|+n<m$. Then
\begin{eqnarray*}
p(\varphi_1+(m-k)\chi_1,\varphi_2,\xi)\w_{m\omega_1}&=&p_1(\varphi_2,\varphi_1+(m-k)\chi_1)X_{-3}(\xi)\w_{m\omega_1}\\
&=&\frac{(-1)^{m-k}}{\binom{\varphi_1(1)+m-k}{m-k}}p_1(\varphi_2,\varphi_1)\binom{h_1-|\varphi_2|-|\varphi_1|}{m-k}X_{-3}(\xi)\w_{m\omega_1}\\
&&\textnormal{by Proposition \ref{degp}(6)}\\
&=&\frac{(-1)^{m-k}}{\binom{\varphi_1(1)+m-k}{m-k}}p_1(\varphi_2,\varphi_1)X_{-3}(\xi)\binom{h_1-k}{m-k}\w_{m\omega_1}\\
&&\textnormal{by Proposition \ref{degp}(7)}\\
&=&\frac{(-1)^{m-k}}{\binom{\varphi_1(1)+m-k}{m-k}}p(\varphi_1,\varphi_2,\xi)\w_{m\omega_1}
\end{eqnarray*}
Therefore $p(\varphi_1,\varphi_2,\xi)\w_{m\omega_1}=(-1)^{m-k}\binom{\varphi_1(1)+m-k}{m-k}p(\varphi_1+(m-k)\chi_1,\varphi_2,\xi)\w_{m\omega_1}\in C$.
\end{proof}

\begin{lem}\label{firstcase}
$$C=\sum_{r+s=0}^{m}\bu_r(x_{-1}\otimes A)\bu_{m-r-s}(h_1\otimes A)\Lambda_s(x_{-3}\otimes A)\w_{m\omega_1}$$
\end{lem}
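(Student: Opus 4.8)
The plan is to prove the two inclusions of the claimed identity separately; write $N$ for the right--hand side, $N=\sum_{r+s=0}^{m}\bu_r(x_{-1}\otimes A)\bu_{m-r-s}(h_1\otimes A)\Lambda_s(x_{-3}\otimes A)\w_{m\omega_1}$. The inclusion $C\subseteq N$ is immediate from the Remark following Proposition~\ref{degp}: each spanning vector $p(\varphi_1,\varphi_2,\xi)\w_{m\omega_1}$ of $C$ has $|\varphi_1|+|\varphi_2|+n=m$, and $p(\varphi_1,\varphi_2,\xi)\in\bu_{|\varphi_2|}(x_{-1}\otimes A)\bu_{|\varphi_1|}(h_1\otimes A)\Lambda_{n}(x_{-3}\otimes A)$ by that Remark; putting $r:=|\varphi_2|$ and $s:=n$ gives $r+s\le m$ and $m-r-s=|\varphi_1|$, so the vector lies in the $(r,s)$ summand of $N$.

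For $N\subseteq C$ I would first replace $N$ by a convenient spanning set. Since $x_{-1}\otimes A$ and $h_1\otimes A$ are abelian and $x_{-3}\otimes A$ anticommutes with itself (so that $\bu(x_{-3}\otimes A)=\Lambda(x_{-3}\otimes A)$), the spaces $\bu_r(x_{-1}\otimes A)$, $\bu_j(h_1\otimes A)$ and $\Lambda_s(x_{-3}\otimes A)$ are spanned by the $X_{-1}(\psi)$ with $\psi\in\f(\bb)$, $|\psi|\le r$, by the $H_1(\phi)$ with $\phi\in\f(\bb)$, $|\phi|\le j$, and by the $X_{-3}(\eta)$ with $\eta\in\bb^{n'}$, $n'\le s$, respectively. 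Hence $N$ is spanned by those $X_{-1}(\psi)H_1(\phi)X_{-3}(\eta)\w_{m\omega_1}$ for which there exist $r,s\ge 0$ with $r+s\le m$, $|\psi|\le r$, $|\phi|\le m-r-s$ and $n'\le s$; taking $r=|\psi|$, $s=n'$ shows such $r,s$ exist precisely when $|\psi|+|\phi|+n'\le m$. So it suffices to prove that $X_{-1}(\psi)H_1(\phi)X_{-3}(\eta)\w_{m\omega_1}\in C$ whenever $\psi,\phi\in\f(\bb)$, $\eta\in\bb^{n'}$ and $|\psi|+|\phi|+n'\le m$.

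I would prove this by induction on $|\phi|$. By Proposition~\ref{degp}(3) we may write $X_{-1}(\psi)H_1(\phi)=(-1)^{|\psi|+|\phi|}p_1(\psi,\phi)+u$ with $u\in\bu_{|\psi|}(x_{-1}\otimes A)\bu_{|\phi|-1}(h_1\otimes A)$ (and $u=0$ when $|\phi|=0$, since $\bu_{-1}=0$). Multiplying on the right by $X_{-3}(\eta)$ and using $p_1(\psi,\phi)X_{-3}(\eta)=p(\phi,\psi,\eta)$ gives $X_{-1}(\psi)H_1(\phi)X_{-3}(\eta)\w_{m\omega_1}=(-1)^{|\psi|+|\phi|}p(\phi,\psi,\eta)\w_{m\omega_1}+uX_{-3}(\eta)\w_{m\omega_1}$. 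The first term lies in $C$: it is a defining spanning vector of $C$ if $|\psi|+|\phi|+n'=m$, and it lies in $C$ by Lemma~\ref{<m} if $|\psi|+|\phi|+n'<m$. The second term lies in the span of the vectors $X_{-1}(\psi')H_1(\phi')X_{-3}(\eta)\w_{m\omega_1}$ with $|\psi'|\le|\psi|$ and $|\phi'|\le|\phi|-1$; these satisfy $|\phi'|<|\phi|$ and $|\psi'|+|\phi'|+n'\le m-1$, hence lie in $C$ by the inductive hypothesis. This closes the induction and yields $N\subseteq C$.

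The argument is essentially bookkeeping, and I do not expect a genuine obstacle. The two points needing care are: checking in the reduction step that the existence of admissible $(r,s)$ is equivalent to $|\psi|+|\phi|+n'\le m$ (so that the reduced spanning set is correctly identified); and checking in the inductive step that the error term $u$ really lowers the $h_1$--degree strictly below $|\phi|$ while keeping the total degree $\le m$, so that both Lemma~\ref{<m} and the inductive hypothesis genuinely apply. Everything else is a direct appeal to Proposition~\ref{degp}(3), the Remark following it, Lemma~\ref{<m}, and the definition of $C$.
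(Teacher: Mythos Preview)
Your proposal is correct and follows essentially the same approach as the paper's proof: both directions use Proposition~\ref{degp}(3) (or the Remark following it) for $C\subseteq N$, and for $N\subseteq C$ both reduce to showing $X_{-1}(\psi)H_1(\phi)X_{-3}(\eta)\w_{m\omega_1}\in C$ via Proposition~\ref{degp}(3) together with Lemma~\ref{<m}. The only cosmetic difference is that you induct on the $h_1$--degree $|\phi|$ alone, whereas the paper inducts on the total degree $k=|\psi|+|\phi|+n'$; since the error term from Proposition~\ref{degp}(3) strictly lowers the $h_1$--degree while not increasing the $x_{-1}$--degree, either induction variable works.
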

\begin{proof}
Proposition \ref{degp}(3) implies that for all $\varphi_1,\varphi_2\in\f$ and $\xi\in A^n$ with $|\varphi_1|+|\varphi_2|+n=m$
$$p(\varphi_1,\varphi_2,\xi)\w_{m\omega_1}\in\sum_{r+s=0}^{m}\bu_r(x_{-1}\otimes A)\bu_{m-r-s}(h_1\otimes A)\Lambda_s(x_{-3}\otimes A)\w_{m\omega_1}$$
This implies that $C$ is a subset of the sum.

To show that the sum is a subset of $C$, we will use induction on $k:=n+|\psi_1|+|\psi_2|$ to prove the claim that any element of the form $X_{-1}(\psi_1)H_1(\psi_2)X_{-3}(\xi)\w_{m\omega_1}$, where $\psi_1,\psi_2\in\f(\bb)$, $\xi\in\bb^n$, and $k\leq m$ is in $C$. Since for all $a\in A$,
\begin{eqnarray*}
p(\chi_a,0,\varnothing)\w_{m\omega_1}&=&-(h_1\otimes a)\w_{m\omega_1}\\
p(0,\chi_a,\varnothing)\w_{m\omega_1}&=&-(x_{-1}\otimes a)\w_{m\omega_1}\\
p(0,0,(a))\w_{m\omega_1}&=&-(x_{-3}\otimes a)\w_{m\omega_1}
\end{eqnarray*}
we see that this claim is true for $k=1$ by Lemma \ref{<m}.
\begin{eqnarray*}
p(\psi_1,\psi_2,\xi)\w_{m\omega_1}&=&p_1(\psi_2,\psi_1)X_{-3}(\xi)\w_{m\omega_1}\\
&=&(-1)^{|\psi_1|+|\psi_2|}X_{-1}(\psi_2)H_1(\psi_1)X_{-3}(\xi)\w_{m\omega_1}+u\w_{m\omega_1}
\end{eqnarray*}
where $u\in\bu_r(x_{-1}\otimes A)\bu_s(h_1\otimes A)\Lambda_n(x_{-3}\otimes A)$ for $r+s+n<k$, by Proposition \ref{degp}(3). By the induction hypothesis $u\w_{m\omega_1}\in C$. $p(\psi_1,\psi_2,\xi)\w_{m\omega_1}\in C$ by Lemma \ref{<m}. Therefore $X_{-1}(\psi_2)H_1(\psi_1)X_{-3}(\xi)\w_{m\omega_1}\in C$.
\end{proof}

\begin{lem}\label{q1winC}
Let $\varphi,\chi\in\f$ and $\xi\in A^n$ with $|\varphi|+n=m+1$. Then
$$q_1(\varphi,\chi)X_{-3}(\xi)\w_{m\omega_1}\in C$$
\end{lem}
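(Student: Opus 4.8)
The plan is to show that $q_1(\varphi,\chi)X_{-3}(\xi)\w_{m\omega_1} \in C$ by a double induction, reducing the ``excess'' $|\varphi| - |\chi|$ via the recursive definition of $q_1$ and then appealing to the structural description of $C$ given in Lemma \ref{firstcase}. The key point is that the defining relation $(x_{-1})^{\lambda(h_1)+1}\w_\lambda = 0$ with $\lambda = m\omega_1$ reads $(x_{-1})^{m+1}\w_{m\omega_1}=0$, and more generally, since $|\varphi| + n = m+1$, the total ``weight-lowering by $x_{-1}$ and $x_{-3}$'' in $q_1(\varphi,\chi)X_{-3}(\xi)$ forces us out of the range that survives on $\w_{m\omega_1}$ unless there is compensating collapse. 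So the heart of the argument is to peel off one factor using the $q_1$-recursion and show each piece lands in $C$.

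First I would dispose of the base case $|\chi| = |\varphi|$: here $n = m+1-|\varphi| = m+1 - |\chi|$, and by Proposition \ref{degp}(2) (applied with $\phi = \varphi$, $\psi = \chi$, so $|\phi| = |\psi|$ and the $\bu_{|\phi|-|\psi|}$ term is just $\bu_0$) we have $q_1(\varphi,\chi) \equiv (-1)^{|\chi|} X_1(\chi)X_{-1}(\varphi) \bmod \bu(\fg\otimes A)(x_1\otimes A)$ up to a term in $\bu_{|\chi|-1}(h_1\otimes A)$. Alternatively, and more directly, I would use Proposition \ref{degp}(4)--(5): when $\varphi$ has the form $r\chi_1 + \varphi'$ these give $q_1$ explicitly in terms of $(x_{-1})^{(\cdot)}$, $X_{-1}$, and $H_1$. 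Combined with $(x_{-1})^{m+1}\w_{m\omega_1}=0$ and $\binom{h_1-i}{j}X_{-3}(\xi) = X_{-3}(\xi)\binom{h_1-i-n}{j}$ from Proposition \ref{degp}(7) (together with $h_1\w_{m\omega_1} = m\w_{m\omega_1}$), the binomial coefficient $\binom{h_1 - i - n}{j}$ evaluated on $\w_{m\omega_1}$ becomes a scalar, and what remains is a $\C$-combination of elements $X_{-1}(\psi_1)H_1(\psi_2)X_{-3}(\xi)\w_{m\omega_1}$ with $|\psi_1| + |\psi_2| + n \leq m$, hence in $C$ by Lemma \ref{firstcase}.

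For the inductive step, with $|\varphi| > |\chi|$, I would use the last branch of the $q_1$-recursion, $q_1(\varphi,\chi) = \sum_{\phi\in\f_{|\chi|}(\varphi)} X_{-1}(\varphi-\phi)q_1(\phi,\chi)$. Each inner $q_1(\phi,\chi)$ now has $|\phi| = |\chi|$, so by the base case (applied with the multiset $\xi$ replaced by the pair $(\varphi-\phi$ acting first, then $\xi)$ — i.e.\ absorbing the extra $X_{-1}(\varphi-\phi)$ into the count) one reduces to elements $X_{-1}(\psi_1')H_1(\psi_2')X_{-3}(\xi)\w_{m\omega_1}$; the total degree is $|\varphi-\phi| + |\psi_1'| + |\psi_2'| + n \leq (|\varphi|-|\chi|) + |\chi| + n = |\varphi| + n = m+1$. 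The subtle point is that this bound is $m+1$, not $m$, so Lemma \ref{firstcase} does not immediately apply. This is where I expect the main obstacle to lie: one must show that the ``top'' component, where the degree is exactly $m+1$, actually vanishes or collapses. That top component is governed by $X_{-1}$ applied $\geq m+1 - n$ times collectively together with the $n$ factors of $x_{-3}$; using $x_{-1}\otimes a$ commuting suitably with $x_{-3}\otimes b$ (the bracket $[x_{-1}, x_{-3}] = 0$ from the table) one can move all $x_{-1}$-factors together, and when $n$ of the $m+1$ slots are $x_{-3}$ the remaining $x_{-1}$-string has length $\geq m+1-n$; combined with the $H_1$-factors absorbing into scalars via Proposition \ref{degp}(7) and then $(x_{-1})^{m+1-n}\cdots$ — wait, one needs the relation $(x_{-1})^{m+1}\w_{m\omega_1}=0$, but here only $m+1-n$ copies appear. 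The resolution is that the weight of $q_1(\varphi,\chi)X_{-3}(\xi)\w_{m\omega_1}$ in the $h_1$-direction is $m + 2(|\chi| - |\varphi|) - n < m - n \leq m$ when $n \geq 1$...

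Let me restate the inductive step more carefully: I would instead induct on $|\varphi| - |\chi| \geq 0$ and, within that, reduce $n$. When $n \geq 1$, write $X_{-3}(\xi) = (x_{-3}\otimes\xi(1))X_{-3}(\xi - \xi(1))$ and commute the leading $(x_{-3}\otimes \xi(1))$ leftward through $q_1(\varphi,\chi)$: since $[x_{-1}, x_{-3}] = 0$ and $[h_1, x_{-3}] = x_{-3}$, each commutation either passes freely or produces a term with one fewer $h_1$-factor and the same number of $x_{-1}$- and $x_{-3}$-factors, i.e.\ lower $\deg$; the net effect expresses $q_1(\varphi,\chi)X_{-3}(\xi)$ as $(x_{-3}\otimes\xi(1))\cdot(\text{stuff})\cdot X_{-3}(\xi-\xi(1))$ plus lower-degree corrections, and since $\fn^+\otimes A$ annihilates $\w_{m\omega_1}$ one checks $(x_{-3}\otimes\xi(1))$ acting on the submodule generated... this is getting long. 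The clean route, which I would pursue, is: express $q_1(\varphi,\chi)X_{-3}(\xi)$ modulo $\bu(\fg\otimes A)(x_1\otimes A)$ using Proposition \ref{degp}(2), so that it equals $\pm X_1(\chi)X_{-1}(\varphi)X_{-3}(\xi)$ plus a term of the form $v \in \bu_{|\varphi|-|\chi|}(x_{-1}\otimes A)\bu_{|\chi|-1}(h_1\otimes A)\Lambda_n(x_{-3}\otimes A)$ of total degree $\leq |\varphi|-1+n = m$, which lies in $C$ by Lemma \ref{firstcase}; then $X_1(\chi)X_{-1}(\varphi)X_{-3}(\xi)\w_{m\omega_1} = 0$ because $[x_1, x_{-3}] = 0$ lets $X_1(\chi)$ move past $X_{-3}(\xi)$, and $X_1(\chi)X_{-1}(\varphi)\w_{m\omega_1}$: since $|\varphi| \geq |\chi| + 1$ and... hmm, one needs $X_1(\chi)X_{-1}(\varphi)\w_{m\omega_1}$ to vanish or lie in $C$. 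By Proposition \ref{degp}(1)--(2) the leading behavior of $X_1(\chi)X_{-1}(\varphi)$ on a highest-weight-type vector involves $(x_{-1})^{|\varphi|-|\chi|}$ times $h_1$-terms; when $|\varphi| + n = m+1$ and $n \geq 1$ we get $(x_{-1})^{\leq m}$, which need not vanish, but it is degree $\leq m$ hence in $C$; when $n = 0$ we get $|\varphi| = m+1$ and $(x_{-1})^{m+1}\w_{m\omega_1} = 0$ saves us. So the genuine content is the interplay between the defining relation $(x_{-1})^{m+1}\w_{m\omega_1}=0$, the commutation of $x_1, x_{-3}$, and the degree bookkeeping of Proposition \ref{degp}; the main obstacle is organizing the reduction so that every residual term has total $(x_{-1}, h_1, x_{-3})$-degree at most $m$, letting Lemma \ref{firstcase} finish the job.
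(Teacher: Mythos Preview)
Your ``clean route'' at the end is exactly the paper's argument, and everything before it (the double induction, the $q_1$-recursion, commuting $x_{-3}$ leftward) is unnecessary detour. The paper applies Proposition~\ref{degp}(2) once: modulo $\bu(\fg\otimes A)(x_1\otimes A)$, the element $(-1)^{|\chi|}q_1(\varphi,\chi)$ agrees with $X_1(\chi)X_{-1}(\varphi)$ up to a correction in $\bu_{|\varphi|-|\chi|}(x_{-1}\otimes A)\bu_{|\chi|-1}(h_1\otimes A)$. After multiplying by $X_{-3}(\xi)$ and applying to $\w_{m\omega_1}$, that correction has total degree $(|\varphi|-|\chi|)+(|\chi|-1)+n=m$, so Lemma~\ref{firstcase} places it in $C$. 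You identify this step correctly.

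The gap is in how you dispose of the other two pieces. You claim $[x_1,x_{-3}]=0$, but the bracket table gives $[x_1,x_{-3}]=-x_{-2}$. This is not a cosmetic slip: it is precisely the mechanism the paper uses to kill the term $\bu(\fg\otimes A)(x_1\otimes A)\Lambda_n(x_{-3}\otimes A)\w_{m\omega_1}$. Commuting $x_1\otimes a$ past each $x_{-3}\otimes b$ produces $-x_{-2}\otimes ab$; since $[x_{-2},x_{-3}]=0$, these $x_{-2}$-factors slide to the right and annihilate $\w_{m\omega_1}$ by the extra defining relation of $\W_A(m\omega_1)$. Your attempt to instead reduce to $X_1(\chi)X_{-1}(\varphi)\w_{m\omega_1}$ rests on the false bracket identity, and the subsequent discussion (``leading behavior involves $(x_{-1})^{|\varphi|-|\chi|}$ times $h_1$-terms \ldots\ hence in $C$'') is circular: it reproduces a $q_1$-type expression of the same shape you started with. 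For the remaining term $X_1(\chi)X_{-1}(\varphi)X_{-3}(\xi)\w_{m\omega_1}$ the paper invokes a direct weight argument to conclude it vanishes; you never reach an equivalent statement. So: keep your clean route, fix the bracket, and replace the commutation maneuver by the paper's two observations (the $x_{-2}$-annihilation for the $(x_1\otimes A)$-tail, and the weight argument for the $X_1(\chi)X_{-1}(\varphi)X_{-3}(\xi)$ term).
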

\begin{proof}
If $|\chi|>|\varphi|$ the claim is trivially true. If $|\chi|\leq|\varphi|$ by Proposition \ref{degp}(2)
\begin{eqnarray*}
&&\hskip-.4in\left(X_1(\chi)X_{-1}(\varphi)-(-1)^{|\chi|}q_1(\varphi,\chi)\right)X_{-3}(\xi)\w_{m\omega_1}\\
&\in&\left(\bu(\fg\otimes A)\bu(x_1\otimes A)+\bu_{|\varphi|-|\chi|}(x_{-1}\otimes A)\bu_{|\chi|-1}(h_1\otimes A)\right)\Lambda_n(x_{-3}\otimes A)\w_{m\omega_1}
\end{eqnarray*}
By weight considerations $X_1(\chi)X_{-1}(\varphi)X_{-3}(\xi)\w_{m\omega_1}=0$. $\bu(\fg\otimes A)\bu(x_1\otimes A)\Lambda_n(x_{-3}\otimes A)\w_{m\omega_1}=0$ because $[x_1,x_{-3}]\w_{m\omega_1}=-x_{-2}\w_{m\omega_1}=0$ and $x_{-2}$ and $x_{-3}$ commute. So
$$q_1(\varphi,\chi)X_{-3}(\xi)\w_{m\omega_1}\in\bu_{|\varphi|-|\chi|}(x_{-1}\otimes A)\bu_{|\chi|-1}(h_1\otimes A)\Lambda_n(x_{-3}\otimes A)\w_{m\omega_1}$$
Hence $q_1(\varphi,\chi)X_{-3}(\xi)\w_{m\omega_1}\in C$ by Lemma \ref{firstcase}.
\end{proof}

\begin{lem}\label{m+1}
Let $j,l,n\in\bbz_{\geq0}$ with $n+j+l\leq m+1$. Then
$$(x_{-1})^{(m+1-j-l-n)}\bu_j(x_{-1}\otimes A)\bu_l(h_1\otimes A)\Lambda_n(x_{-3}\otimes A)\w_{m\omega_1}\subset C$$
\end{lem}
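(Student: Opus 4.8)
\textbf{Proof plan for Lemma \ref{m+1}.}

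The plan is to induct on $n$, the number of $x_{-3}$ factors, with Lemma \ref{q1winC} serving as the anchor. The key observation is that $q_1(\varphi, \chi)$ with $|\varphi| = m+1-n$ produces exactly the kind of expression appearing on the left-hand side: by Proposition \ref{degp}(5), $q_1(\varphi + r\chi_1, \chi)$ has leading term a scalar multiple of $(x_{-1})^{(r-|\chi|)}X_{-1}(\varphi)H_1(\chi)$ modulo terms with strictly more divided-power $x_{-1}$ factors on the left. So first I would fix a monomial $(x_{-1})^{(m+1-j-l-n)}X_{-1}(\psi_1)H_1(\psi_2)X_{-3}(\xi)\w_{m\omega_1}$ with $|\psi_1| \le j$, $|\psi_2| \le l$, $|\xi| = n$, and realize it (up to nonzero scalar and lower-order correction) as $q_1(\psi_1 + (m+1-l-n)\chi_1, \psi_2)X_{-3}(\xi)\w_{m\omega_1}$, which lies in $C$ by Lemma \ref{q1winC}. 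The ``lower-order correction'' terms are governed by the sum on the right-hand side of Proposition \ref{degp}(5): they have the form $(x_{-1})^{(s)}\cdot(\text{stuff})$ with $s$ strictly larger than the target power $m+1-j-l-n$, hence fewer remaining free $x_{-1}$'s, fewer free $h_1$'s possible, and the same $n$ — so they fall under the inductive hypothesis on the total degree $j+l$ (or can be absorbed by a secondary downward induction on the exponent of the distinguished $(x_{-1})^{(s)}$ for fixed $n$).

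Concretely I would run the argument as follows. The base case $n=0$ handles $(x_{-1})^{(m+1-j-l)}\bu_j(x_{-1}\otimes A)\bu_l(h_1\otimes A)\w_{m\omega_1} \subset C$ using Lemma \ref{q1winC} with $\xi = \varnothing$ and Proposition \ref{degp}(5); within this case one inducts on $j+l$, peeling off the correction terms from \ref{degp}(5) which have strictly larger distinguished $x_{-1}$-power and therefore strictly smaller $j$. For the inductive step on $n$, note that the $x_{-3}\otimes A$ factors commute with everything relevant ($x_{-3}$ commutes with $x_{-3}$, and by Proposition \ref{degp}(7) the binomials in $h_1$ pass through $X_{-3}(\xi)$ at the cost of a shift), so the same identity $q_1(\psi_1 + r\chi_1, \psi_2)X_{-3}(\xi)\w_{m\omega_1} \in C$ together with \ref{degp}(5) expresses the target monomial modulo terms that are either (a) of the same shape with strictly larger distinguished $x_{-1}$-power, handled by the secondary induction, or (b) genuinely covered by Lemma \ref{q1winC} directly. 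Summing over all monomials spanning $\bu_j(x_{-1}\otimes A)\bu_l(h_1\otimes A)\Lambda_n(x_{-3}\otimes A)$ gives the claim.

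The main obstacle I anticipate is bookkeeping the correction terms from Proposition \ref{degp}(5) cleanly: the index $s$ ranges over $r-|\chi|+1$ up to $\min\{r, r-|\chi|+|\varphi|-\varphi(1)\}$, and one must check that in every such term the combined parameters $(j', l', n)$ with the new distinguished power $(x_{-1})^{(s)}$ still satisfy $n + j' + l' \le m+1$ and sit strictly below the current term in whichever well-ordering the induction uses (lexicographic on $(n,\ j+l,\ -\text{distinguished power})$ should work). A secondary subtlety is that $H_1(\psi_2)$ and the divided power $(x_{-1})^{(s)}$ do not commute, so when one uses \ref{degp}(5) the $h_1$-part must be understood as appearing to the right of all $x_{-1}$'s exactly as in the statement of that identity; I would therefore always keep expressions in the normal-ordered form $(x_{-1})^{(\bullet)}\bu(x_{-1}\otimes A)\bu(h_1\otimes A)\Lambda(x_{-3}\otimes A)$ and invoke \ref{degp}(5), \ref{degp}(7) only in that form. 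Modulo this bookkeeping the argument is a routine nested induction.
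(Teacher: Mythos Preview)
Your approach is essentially the paper's: use Proposition~\ref{degp}(5) to write the target monomial as $q_1(\varphi+r\chi_1,\chi)X_{-3}(\xi)\w_{m\omega_1}$ plus correction terms with strictly smaller $x_{-1}\otimes A$-degree, invoke Lemma~\ref{q1winC} on the $q_1$ piece, and induct. Two small points. First, the outer induction on $n$ is superfluous: Lemma~\ref{q1winC} already applies for arbitrary $\xi\in A^n$ (with $|\varphi|+n=m+1$), so the paper simply inducts on $j$ for all $n$ at once, using Proposition~\ref{degp}(4) for the base case $j=0$; you never actually call the hypothesis for smaller $n$ anywhere. Second, your choice $r=m+1-l-n$ is off: to hit $(x_{-1})^{(m+1-j-l-n)}$ and simultaneously satisfy $|\psi_1+r\chi_1|+n=m+1$ (so that Lemma~\ref{q1winC} applies) you need $r=m+1-j-n$ with $|\psi_1|=j$, $|\psi_2|=l$; the strict inequalities $|\psi_1|<j$ or $|\psi_2|<l$ are absorbed by Lemma~\ref{firstcase} since then the total degree drops to at most $m$.
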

\begin{proof}
By Lemma \ref{firstcase} it suffices to show that for all $\varphi\in\f_j$, $\chi\in\f_l$ and $\xi\in A^n$
$$(x_{-1})^{(m+1-j-l-n)}X_{-1}(\varphi)H_1(\chi)X_{-3}(\xi)\w_{m\omega_1}\in C$$
This will be proved by induction on $j$. In the case $j=0$ we have by Proposition \ref{degp}(4) and Lemma \ref{q1winC}
$$(x_{-1})^{(m+1-l-n)}H_1(\chi)X_{-3}(\xi)\w_{m\omega_1}=q_1((m+1-n)\chi_1,\chi)X_{-3}(\xi)\w_{m\omega_1}\in C$$
For the induction step we have by Proposition \ref{degp}(5) and Lemma \ref{q1winC}\\
$\displaystyle\binom{\varphi(1)+m+1-j-n-l}{\varphi(1)}(x_{-1})^{(m+1-j-l-n)}X_{-1}(\varphi)H_1(\chi)X_{-3}(\xi)\w_{m\omega_1}$
$$=q_1(\varphi+(m+1-j-n)\chi_1,\chi)X_{-3}(\xi)\w_{m\omega_1}+u$$
where
$$u\in\sum_{s=m+1-n-(j-1)-l}^{\min\{m+1-n-j,m+1-n-l-\varphi(1)\}}(x_{-1})^{(s)}\bu_{m+1-n-s-l}(x_{-1}\otimes A)\bu_{l}(h_1\otimes A)X_{-3}(\xi)\w_{m\omega_1}$$
$q_1(\varphi+(m+1-j-n)\chi_1,\chi)X_{-3}(\xi)\w_{m\omega_1}\in C$ by Lemma \ref{q1winC} and $u\in C$ by the induction hypothesis. Therefore $(x_{-1})^{(m+1-j-l-n)}X_{-1}(\varphi)H_1(\chi)X_{-3}(\xi)\w_{m\omega_1}\in C$.
\end{proof}

\begin{lem}\label{basecase}
Let $k\in\Z_{\geq0}$ and $\xi\in A^n$ with $k+n>m$. Then $\bu_k(h_1\otimes A)\Lambda_n(x_{-3}\otimes A)\w_{m\omega_1}\subset C$.
\end{lem}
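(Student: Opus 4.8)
The plan is to reduce the statement to showing that $H_1(\chi)X_{-3}(\xi)\w_{m\omega_1}\in C$ for all $\chi\in\f(\bb)$ and $\xi\in\bb^n$, and then to peel off the factors of $h_1$ one at a time. Since $\bu_k(h_1\otimes A)\Lambda_n(x_{-3}\otimes A)\w_{m\omega_1}$ is spanned by the vectors $H_1(\chi)X_{-3}(\xi)\w_{m\omega_1}$ with $|\chi|\le k$ and $|\xi|\le n$, and since any such vector already lies in $C$ by Lemma \ref{firstcase} whenever $|\chi|+|\xi|\le m$, it is enough to treat the vectors with $|\chi|+|\xi|>m$, which I would do by induction on $|\chi|+|\xi|$. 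The key preliminary observation is that $X_{-3}(\xi)\w_{m\omega_1}$ is a highest weight vector of weight $m-n$ for the $\mathfrak{sl}_2$-triple $\{x_{-1},h_1,x_1\}$: indeed $(x_1\otimes a)X_{-3}(\xi)\w_{m\omega_1}=0$ for every $a\in A$, because moving $x_1\otimes a$ to the right past $X_{-3}(\xi)$ each bracket $[x_1,x_{-3}]=-x_{-2}$ produces a factor $x_{-2}\otimes(\cdot)$, which anticommutes with the remaining $x_{-3}\otimes(\cdot)$ since $[x_{-2},x_{-3}]=0$ and then annihilates $\w_{m\omega_1}$ because $(x_{-2}\otimes A)\w_{m\omega_1}=0$, while the straight-through term dies since $(x_1\otimes a)\w_{m\omega_1}=0$; moreover $h_1X_{-3}(\xi)\w_{m\omega_1}=(m-n)X_{-3}(\xi)\w_{m\omega_1}$ from $[h_1,x_{-3}]=-x_{-3}$ and $h_1\w_{m\omega_1}=m\w_{m\omega_1}$; and $(x_{-1})^{(m+1)}X_{-3}(\xi)\w_{m\omega_1}=X_{-3}(\xi)(x_{-1})^{(m+1)}\w_{m\omega_1}=0$ because $x_{-1}$ commutes with $x_{-3}$. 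Hence $X_{-3}(\xi)\w_{m\omega_1}$ generates a highest weight $\mathfrak{sl}_2$-module of highest weight $m-n$ on which $x_{-1}$ acts nilpotently; by $\mathfrak{sl}_2$-representation theory this module is either $0$ or the finite-dimensional irreducible of highest weight $m-n$ (which forces $m-n\ge0$), so $X_{-3}(\xi)\w_{m\omega_1}=0$ if $n\ge m+1$ and $(x_{-1})^{(j)}X_{-3}(\xi)\w_{m\omega_1}=0$ for all $j\ge m-n+1$ if $n\le m$.

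Now suppose $|\chi|+n>m$. The case $n\ge m+1$ is immediate from the above, so assume $n\le m$; then $|\chi|\ge m-n+1$, hence $(x_{-1})^{(|\chi|)}X_{-3}(\xi)\w_{m\omega_1}=0$. Apply Proposition \ref{degp}(1) with $r=|\chi|$ and evaluate on $X_{-3}(\xi)\w_{m\omega_1}$: the term $X_1(\chi)X_{-1}(|\chi|\chi_1)X_{-3}(\xi)\w_{m\omega_1}$ vanishes because $X_{-1}(|\chi|\chi_1)=(x_{-1})^{(|\chi|)}$, and $\bu(\fg\otimes A)(x_1\otimes A)X_{-3}(\xi)\w_{m\omega_1}=0$ by the computation above, so $p_1(0,\chi)X_{-3}(\xi)\w_{m\omega_1}=0$. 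Proposition \ref{degp}(3) with $\phi=0$ gives $p_1(0,\chi)-(-1)^{|\chi|}H_1(\chi)\in\bu_{|\chi|-1}(h_1\otimes A)$, so
$$H_1(\chi)X_{-3}(\xi)\w_{m\omega_1}\in\bu_{|\chi|-1}(h_1\otimes A)\Lambda_n(x_{-3}\otimes A)\w_{m\omega_1}.$$
The right-hand side is spanned by vectors $H_1(\chi')X_{-3}(\xi')\w_{m\omega_1}$ with $|\chi'|+|\xi'|\le|\chi|+n-1$, each of which lies in $C$ by the induction hypothesis if its total degree still exceeds $m$, and by Lemma \ref{firstcase} once that degree is $\le m$. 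Hence $H_1(\chi)X_{-3}(\xi)\w_{m\omega_1}\in C$, which completes the induction and the proof.

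The step I expect to be the main obstacle is the preliminary $\mathfrak{sl}_2$-fact, in particular the vanishing $(x_{-1})^{(m-n+1)}X_{-3}(\xi)\w_{m\omega_1}=0$ for $n\le m$: this is what converts the single defining relation $(x_{-1})^{m+1}\w_{m\omega_1}=0$ into the effective degree bound one needs after $n$ copies of $x_{-3}$ have been applied, and it rests on $X_{-3}(\xi)\w_{m\omega_1}$ being genuinely killed by $x_1\otimes1$, which in turn uses the relation $(x_{-2}\otimes A)\w_{m\omega_1}=0$ special to the quotient $\W_A(m\omega_1)$. Everything else is routine bracket computation with the multiplication table together with bookkeeping via Proposition \ref{degp}.
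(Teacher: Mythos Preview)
Your proof is correct and follows essentially the same approach as the paper: both argue by induction, invoking Proposition~\ref{degp}(1) with $r=|\chi|$ to obtain $p_1(0,\chi)X_{-3}(\xi)\w_{m\omega_1}=0$ and then Proposition~\ref{degp}(3) to peel off one factor of $H_1$. The only difference is that where the paper asserts the base case $X_{-3}(\xi)\w_{m\omega_1}=0$ (for $n>m$) without comment and the vanishing $X_1(\chi)(x_{-1})^{(|\chi|)}X_{-3}(\xi)\w_{m\omega_1}=0$ ``by weight considerations'', you supply the underlying $\mathfrak{sl}_2$ argument explicitly---showing $X_{-3}(\xi)\w_{m\omega_1}$ is a highest-weight vector of $h_1$-weight $m-n$ annihilated by $(x_{-1})^{m+1}$, hence by $(x_{-1})^{m-n+1}$---which is a welcome clarification rather than a different route.
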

\begin{proof}
The proof proceeds by induction on $k$ using Proposition \ref{degp}(1) and
(3).  If $k=0$ then $X_{-3}(\xi)\w_{m\omega_1}=0\in C$. Let $\chi\in\f_k$. Then
\begin{eqnarray*}
0&=&X_1(\chi)X_{-1}(k\chi_1)X_{-3}(\xi)\w_{m\omega_1}\hskip.2in\textnormal{by weight considerations}\\
&=&(-1)^{k}p_1(0,\chi)X_{-3}(\xi)\w_{m\omega_1}\hskip.2in\textnormal{by Proposition \ref{degp}(1)}\\
&=&H_1(\chi)X_{-3}(\xi)\w_{m\omega_1}+u\hskip.2in\textnormal{by Proposition \ref{degp}(3)}
\end{eqnarray*}
where $u\in\bu_{k-1}(h_1\otimes A)\Lambda_n(x_{-3}\otimes A)\w_{m\omega_1}\subset C$ by the induction hypothesis.
\end{proof}

\subsection{}

\noindent\emph{Proof of Theorem \ref{thm}.} The relations $(\lie n^+\otimes A)v=0$, and $hv=(m,0)(h)v$ hold for all $h\in\lie{h}$. Also it can be easily shown that $x_{-i}^{(m,0)(i)+1}v=0$, for all $i\in\{1,2\}$. Thus the map $\Phi$ in the statement of Theorem \ref{thm} gives a well-defined surjection $\W_A(m\omega_1)\to\bu(\fg \otimes A)v$. Let $v(\varphi_1,\varphi_2,\xi)\in\B$ be given. Then, by Proposition \ref{pv},
$$\Phi\left(p(\varphi_1,\varphi_2,\xi)\w_{m\omega_1}\right)=p(\varphi_1,\varphi_2,\xi)v=(-1)^{|\varphi_1|+|\varphi_2|}v(\varphi_1,\varphi_2,\xi)$$
Therefore $\Phi$ is onto $\ts^m(V\otimes A)$. To show that the map is injective we will show that a spanning set is sent to a basis. Lemma \ref{tsbasis} showed that $\B$ is a basis for $\ts^m(V\otimes A)$. Thus it will suffice to show that the set
$$\{p(\varphi_1,\varphi_2, \xi)\w_{m\omega_1}\ |\ \varphi_1,\varphi_2\in\f(\bb),\  \xi \in \bold{B}^n, \ |\varphi_1|+|\varphi_2|+n=m\}$$
spans $\W_A(m\omega_1)$ (i.e. $C=\W_A(m\omega_1)$).

By the Poincar\'{e}-Birkhoff-Witt Theorem for Lie superalgebras
$$\bu(\fg\otimes A)\cong\bu(x_{-1}\otimes A)\bu(h_1\otimes A)\Lambda(x_{-3}\otimes A) \Lambda(x_{-2}\otimes A)\bu(h_2\otimes A)\bu(\lie{n}^+\otimes A)$$ as vector spaces.
Therefore
$$\W_A(m\omega_1)\cong\bu(x_{-1}\otimes A)\bu(h_1\otimes A)\Lambda(x_{-3}\otimes A)\w_{m\omega_1},$$
because $x_{-2}\otimes a$, $h_2\otimes a$ and $\lie{n}^+\otimes A$ act by zero on $\w_{m\omega_1}$.
%For any Lie superalgebra $L= L_{\0}\oplus L_{\1}$, $\bu(L)$ has a natural filtration $\{\bu_r(L_{\0})\otimes \Lambda_s(L_{\1}) | r,s  \in \mathbb{Z}_{\geq 0}\}$ where $\bu_r(L)$(resp.  $ \Lambda_s(L_{\1})$)  is spanned by terms of degree $\leq r$(resp. $\leq s$) .
For injectivity it suffices to show that

$$\bu(x_{-1}\otimes A)\bu(h_1\otimes A)\Lambda(x_{-3}\otimes A)\w_{m\omega_1}\subset C$$

In fact, by weight considerations, it suffices to show that for all $ j, k,n  \in \mathbb{Z}_{\geq 0}$ with $j+n\leq m,$

$$\bu_j(x_{-1}\otimes A)\bu_k(h_1\otimes A)\Lambda_n(x_{-3}\otimes A)\w_{m\omega_1}\subset C.$$ The case $j+k+n \leq m$ follows from Lemma \ref{firstcase}. The case $j+k+n > m$ follows by induction on $j$. The $j=0$ case follows from Lemma \ref{basecase}. The $j+1$ case follows from Lemma \ref{m+1}.

%In Lemma \ref{basecase} let $j+l+n=m+1$ and then see the proof of Lemma 6.2 in "m and A general chi minimal(2)"

Thus $C$ spans $\W_A(m\omega_1) $ and hence the map in the theorem is injective and therefore is an isomorphism of left $\bu(\fg \otimes A)$-modules.

\hskip4.5in $\square$

\end{document}